\documentclass[oneside,a4paper,11pt,notitlepage]{article}
\usepackage[left=0.8in, right=0.8in, top=1.5in, bottom=1.5in]{geometry}

\usepackage[T1]{fontenc} % codifica dei font
\usepackage[utf8]{inputenc} % lettere accentate da tastiera
\usepackage[english]{babel}
\usepackage{amssymb}
\usepackage{lipsum} % genera testo fittizio
\usepackage{lmodern}
\usepackage{amsmath}
\usepackage{mathrsfs}
\usepackage{cancel}
\usepackage{amsthm}
\usepackage{bm}
\usepackage{mathtools}
\usepackage{braket}
\usepackage{esint}
\newcommand{\abs}[1]{{\left|#1\right|}}
\newcommand{\norma}[1]{{\left\Vert#1\right\Vert}}
\newcommand{\dH}{\,d\mathcal{H}^{n-1}}

\newcommand{\nd}[1]{\frac{\partial#1}{\partial\nu}}

\usepackage{enumitem}
\usepackage{booktabs}
\usepackage{graphicx}
\usepackage{tikz}
\usetikzlibrary{patterns}
\usepackage{multicol}
\usepackage{caption}
\usepackage[skins,theorems]{tcolorbox}
\tcbset{highlight math style={enhanced,
colframe=black,colback=white,arc=0pt,boxrule=1pt}}
\def\XXint#1#2#3{{\setbox0=\hbox{$#1{#2#3}{\int}$}
    \vcenter{\hbox{$#2#3$}}\kern-.5\wd0}}

\theoremstyle{definition}
\newtheorem{definizione}{Definition}[section]
\theoremstyle{plain}
\newtheorem{theorem}{Theorem}[section]

\newtheorem{lemma}[theorem]{Lemma}

\theoremstyle{definition}
\newtheorem{esempio}{Example}[section]
\newtheorem{oss}[esempio]{Remark}

\newtheorem*{open*}{Open problems}
\DeclareMathOperator{\R}{\mathbb{R}}

\makeatletter
\newcommand{\myfootnote}[2]{\begingroup
	\def\@makefnmark{}%
	\addtocounter{footnote}{-1}%
	\footnote{\textbf{#1} #2}
	\endgroup}
\makeatother

   \usepackage{hyperref}
  \hypersetup{linktoc=none, bookmarksnumbered, colorlinks=true, linkcolor=magenta, citecolor=cyan}

\title{Local stability for a class of Saint-Venant type inequalities}
\author{Luca Barbato$^\ast$ and Francesco Salerno}
\date{}

\newcommand{\Addresses}{{% additional braces for segregating \footnotesize 
\bigskip 
  \footnotesize 
\noindent \textit{E-mail address}, Luca~Barbato (corresponding author): \texttt{l.barbato@ssmeridionale.it} 

     \medskip 

 \noindent \textit{E-mail address}, Francesco~Salerno: \texttt{f.salerno@ssmeridionale.it}

   \medskip 
 
  \textsc{Mathematical and Physical Sciences for Advanced Materials and Technologies, Scuola Superiore Meridionale, Largo San Marcellino 10, 80138 Napoli, Italy. }

 \par\nopagebreak 

}} 
\begin{document}
\maketitle

\begin{abstract}
We establish a local stability result for a class of Saint–Venant type inequalities. Given the solution $u$ of the Dirichlet torsion problem in a domain $\Omega$, we consider shape functionals $\mathcal{J}(\Omega)$ involving the integral of $j(u)$, where $j$ is convex and satisfies suitable structural assumptions. By Talenti's comparison principle, balls maximize $\mathcal{J}$ among sets of prescribed measure. We prove that this extremal property is stable in the class of nearly spherical sets: the deficit from the optimal value controls the square of the $H^{1/2}$-norm of the boundary perturbation. The argument relies on shape derivative techniques, including the computation of the second variation and the introduction of an adjoint state. As applications, the result covers several relevant examples, including the torsional rigidity, $L^p$-norms of the torsion function for $p\ge 2$, and Moser–Trudinger functional in dimension two.
\newline
\newline
\textsc{Keywords: Shape Derivative, Adjoint state, Torsional rigidity.}   \\
\textsc{MSC 2020: 35J05, 35B35, 49K40.}   
\end{abstract}
\section{Introduction}
In their seminal paper \cite{BDPV}, Brasco, De Philippis and Velichkov proved a sharp quantitative version of the Faber--Krahn inequality for the first eigenvalue of the Dirichlet Laplacian. More precisely, for an open set $\Omega\subset\mathbb{R}^n$, $n\geq 2$, with finite measure, the first eigenvalue is defined by
\begin{equation*}
    \lambda_1(\Omega)=\min_{u\in H^1_0(\Omega)}\left\{\int_\Omega |\nabla u|^2\,dx :\|u\|_{L^2(\Omega)}=1\right\}.
\end{equation*}
It is well known, by the Faber--Krahn inequality, that $\lambda_1(\Omega)$ is minimized among sets of fixed measure by the ball $\Omega^\sharp$ having the same Lebesgue measure as $\Omega$. Brasco, De Philippis and Velichkov strengthened this result by proving a quantitative stability estimate of the form
\begin{equation*}
    \lambda_1(\Omega)-\lambda_1(\Omega^\sharp)\geq c(n)\, d(\Omega,\Omega^\sharp)^2,
\end{equation*}
where $d(\Omega,\Omega^\sharp)$ denotes a suitable measure of the deviation of $\Omega$ from spherical symmetry. Actually this result was the first with sharp exponent on the asymmetry, while results with different exponent were already available in literature (see for instance \cite{BW, FMP, N}).

A key step in their analysis consists in reducing the problem to a quantitative version of the Saint--Venant inequality. The central object in this setting is the torsional rigidity, defined as the $L^1$-norm of the solution to the Dirichlet problem
\begin{equation}\label{Problema(u)}
    \begin{cases}
        -\Delta u=1 & \text{in } \Omega,\\
        u=0 & \text{on } \partial\Omega.
    \end{cases}
\end{equation}
Namely,
\[
    T(\Omega)=\int_\Omega u\,dx.
\]
In the planar case, this quantity represents the resistance to torsion of an elastic beam whose cross-section is $\Omega$. The classical Saint--Venant inequality asserts that, among sets of fixed measure, $T(\Omega)$ is maximized by the ball $\Omega^\sharp$. Brasco, De Philippis and Velichkov proved the corresponding quantitative stability estimate
\begin{equation*}
    T(\Omega^\sharp)-T(\Omega)\geq c(n)\, d(\Omega,\Omega^\sharp)^2.
\end{equation*}
This result belongs to a broad literature on quantitative inequalities for spectral and geometric functionals \cite{AB2,BFNT,CCLMP,F}, see also \cite[Chapter 7]{H}. Related stability questions have also been investigated in the more general context of solutions to PDEs \cite{AL,AB,ABCMP,ABMP,MS}. 

The classical Saint--Venant inequality can be derived from Talenti's celebrated comparison principle \cite{T}, which compares the solution of \eqref{Problema(u)} with the solution of a suitable symmetrized problem posed on $\Omega^\sharp$. This comparison also yields the so-called mass comparison principle; see \cite{ALT}. As a consequence, the ball is extremal, among sets of prescribed measure, for a broad class of shape functionals of the form
\begin{equation*}
    \mathcal{J}(\Omega)=\int_\Omega j(u)\,dx,
\end{equation*}
where $j:\mathbb{R}\to\mathbb{R}$ is a convex, nonnegative, Lipschitz continuous function satisfying $j(0)=0$.

The aim of the present paper is to extend the quantitative stability result discussed above to this more general class of functionals, at least within the class of nearly spherical sets. More precisely, we consider bounded sets $\Omega\subset\mathbb{R}^n$ whose boundary is a small normal perturbation of the unit sphere. Namely, there exists a function $\varphi\in C^{2,\gamma}(\partial B_1)$, with $    \|\varphi\|_{L^\infty(\partial B_1)}\leq \frac12,
$ such that
\begin{equation*}
    \partial\Omega=\left\{ x\in\mathbb{R}^n :x=(1+\varphi(y))y,\quad y\in\partial B_1\right\}.
\end{equation*}

Our approach is based on the method of shape derivatives; we refer to \cite{HP} for a comprehensive introduction to the subject. In this framework, one considers a volume-preserving deformation $\{\Omega(t)\}_{t\geq 0}$ of the unit ball, with $\Omega(0)=B_1$, and studies the corresponding Gâteaux derivative of the functional. Formally, this derivative is given by
\begin{equation*}
    d\mathcal{J}(B_1)[V]=\lim_{t\to0^+}\frac{\mathcal{J}(\Omega(t))-\mathcal{J}(B_1)}{t},
\end{equation*}
where $V$ denotes the velocity field associated with the deformation.

In particular, we shall use a family of sets $\Omega(t)$, starting from $\Omega(0)=B_1$, whose evolution is described by the following lemma. This result is taken from \cite[Lemma A.1]{BDPV}.
\begin{lemma}\label{lemmaA.1}
    Given $\gamma\in(0,1]$ there exists $\delta_e=\delta_e(n,\gamma)>0$ and a modulus of continuity $\omega_e$ such that, for every nearly spherical set $\Omega$ parametrized by $\varphi$ with $\|\varphi\|_{C^{2,\gamma}(\partial B_1)}\leq\delta_e$ and $\abs{\Omega}=\abs{B_1}$, we can find an autonomous vector field $V_\varphi$ for which the following holds true:
    \begin{itemize}
        \item [(i)] $\mathrm{div}\,V_\varphi=0$ in a $\delta_e$-neighborhood of $\partial B_1$,
        \item [(ii)] if $\Phi_t:=\Phi(t,x)$ is the flow of $V_\varphi$, i.e.
        \begin{equation*}
            \partial_t \Phi_t=V_\varphi(\Phi_t),\quad \Phi_0(x)=x,
        \end{equation*}
        then $\Phi_1(\partial B_1)=\partial\Omega$ and $\abs{\Phi_t(B_1)}=\abs{B_1}$ for all $t\in[0,1]$,
        \item [(iii)] We have  
        \begin{equation*}
            \|\Phi_t-Id\|_{C^{2,\gamma}}\leq\omega_e(\|\varphi\|_{C^{2,\gamma}(\partial B_1)})\quad \text{for every }t\in[0,1],
        \end{equation*}
        \begin{equation*}
            \|\varphi-(V_\varphi\cdot\nu_0)\|_{H^{1/2}(\partial B_1)}\leq\omega_e(\|\varphi\|_{L^\infty(\partial B_1)})\|\varphi\|_{H^{1/2}(\partial B_1)},
        \end{equation*}
        and
        \begin{equation*}
            (V_\varphi\cdot\theta)\circ\Phi_t-V_\varphi\cdot\nu_0=(V_\varphi\cdot\nu_0)\psi_t \quad \text{on }\partial B_1,
        \end{equation*}
        with $\|\psi_t\|_{C^{2,\gamma}(\partial B_1)}\leq\omega_e(\|\varphi\|_{C^{2,\gamma}(\partial B_1)})$.
    \end{itemize}
\end{lemma}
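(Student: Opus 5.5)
The plan is to write down an explicit radial, divergence-free field and check (i)--(iii) by direct computation. The basic observation is that, for any function $f$ on $\partial B_1$, the field $f(x/|x|)\,x/|x|^n$ is divergence-free away from the origin. Indeed, $x/|x|^n$ is divergence-free there, and $\nabla[f(x/|x|)]\cdot x=0$ because $f(x/|x|)$ is $0$-homogeneous. Given $\varphi$, I set
\[
f:=\frac{(1+\varphi)^n-1}{n}\quad\text{on }\partial B_1,\qquad V_\varphi(x):=\eta(|x|)\,f\!\left(\frac{x}{|x|}\right)\frac{x}{|x|^n}.
\]
Here $\eta$ is a smooth cutoff, equal to $1$ on $[1/2,3/2]$ and vanishing near $0$. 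Property (i) then holds with $\delta_e\le 1/2$. Throughout, I read $\theta$ in (iii) as the radial unit field $x/|x|$, which extends $\nu_0$; the stated $C^{2,\gamma}$ bound on $\psi_t$ is what one gets under this reading.

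\textbf{Flow and property (ii).} Write $\Phi_t(y)=r(t,y)\,y$ for $y\in\partial B_1$. As long as the trajectory stays where $\eta=1$, the flow equation reduces to $\partial_t r=f(y)\,r^{1-n}$ with $r(0,y)=1$. This integrates to
\[
r(t,y)^n=1+ntf(y)=1+t\big((1+\varphi)^n-1\big).
\]
Since $\|\varphi\|_\infty$ is small, $r$ stays in $[1/2,3/2]$ for $t\in[0,1]$, which justifies ignoring $\eta$. At $t=1$ we get $r=1+\varphi$, so $\Phi_1(\partial B_1)=\partial\Omega$. For the volume, $\Phi_t(B_1)$ is the star-shaped set with radial function $r(t,\cdot)$, hence
\[
|\Phi_t(B_1)|=\frac1n\int_{\partial B_1}r^n\,d\mathcal H^{n-1}=|B_1|+t\int_{\partial B_1}f\,d\mathcal H^{n-1}.
\]
The same formula at $t=1$ gives $\int_{\partial B_1} f\,d\mathcal H^{n-1}=|\Omega|-|B_1|=0$, so the volume is $|B_1|$ for every $t\in[0,1]$.

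\textbf{Estimates in (iii).} For the first bound, $\Phi_t-Id$ is $y\mapsto(r(t,y)-1)y$ near the sphere, extended through the smooth cutoff. Since $r=(1+ntf)^{1/n}$ is a smooth function of $\varphi$ vanishing at $\varphi=0$, composition estimates in $C^{2,\gamma}$ bound its $C^{2,\gamma}$ norm by $\omega_e(\|\varphi\|_{C^{2,\gamma}})$.

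For the $H^{1/2}$ estimate, note that $V_\varphi\cdot\nu_0=f$ on $\partial B_1$. Write $f-\varphi=F(\varphi)$ with $F(s)=\frac{(1+s)^n-1}{n}-s$, so that $F(0)=0$ and $|F'(s)|\le C|s|$ for $|s|\le1/2$. Then:
\begin{itemize}
\item $|F(\varphi(x))-F(\varphi(y))|\le C\|\varphi\|_\infty|\varphi(x)-\varphi(y)|$, which controls the Gagliardo seminorm;
\item $|F(\varphi)|\le C\|\varphi\|_\infty|\varphi|$, which controls the $L^2$ part.
\end{itemize}
Together these give $\|\varphi-V_\varphi\cdot\nu_0\|_{H^{1/2}}\le C\|\varphi\|_\infty\|\varphi\|_{H^{1/2}}$, with a modulus depending only on the $L^\infty$ norm, as required.

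For the last identity, $(V_\varphi\cdot\theta)(\Phi_t(y))=f(y)\,r(t,y)^{1-n}$. Hence the identity holds with
\[
\psi_t=r^{1-n}-1=(1+ntf)^{\frac{1-n}{n}}-1.
\]
This is a smooth function of $\varphi$ vanishing at $0$, so $\|\psi_t\|_{C^{2,\gamma}}\le\omega_e(\|\varphi\|_{C^{2,\gamma}})$ uniformly in $t\in[0,1]$.

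\textbf{Main obstacle.} The delicate point is keeping $\psi_t$ in $C^{2,\gamma}$. If $\theta$ were the true normal to $\Phi_t(\partial B_1)$, the factor $(1+|\nabla_\tau r|^2/r^2)^{-1/2}$ would appear and cost one derivative. This is exactly why I take $\theta$ to be the radial field and choose the field to be exactly radial, so that no tangential derivatives of $r$ enter $\psi_t$. The remaining care is in the nonlinear composition estimates in $C^{2,\gamma}$ and $H^{1/2}$, which are standard for small $\varphi$.
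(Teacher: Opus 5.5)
The paper gives no proof of this lemma; it imports it from \cite[Lemma A.1]{BDPV}. Your field $\eta(|x|)f(x/|x|)\,x/|x|^n$ with $f=((1+\varphi)^n-1)/n$ is essentially the construction used there. Your verifications are correct: (i), (ii), the bound on $\Phi_t-Id$, and the $H^{1/2}$ estimate (via the Lipschitz bound on $F$ and the equivalence of the harmonic-extension seminorm with the Gagliardo seminorm).

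The one real issue is your reading of $\theta$. Taking $\theta=x/|x|$ makes the last identity in (iii) almost tautological, and it is not what the paper uses. In the improved-continuity lemma the authors invoke this item in two forms:
\[
|(V\cdot\nu_t)\circ\Phi_t-V\cdot\nu_0|\le\omega\,|V\cdot\nu_0|,\qquad |V^\tau\circ\Phi_t|\le\omega\,|V\cdot\nu_0|.
\]
Here $\nu_t$ is the outer normal to $\Phi_t(\partial B_1)$ and $V^\tau$ the corresponding tangential part. So $\theta$ should be $\nu_t$, and you need to add that computation.

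With your field it is immediate. At $\Phi_t(y)=r y$ one has $\nu_t=(ry-\nabla_\tau r)/\sqrt{r^2+|\nabla_\tau r|^2}$, hence
\[
\psi_t=\frac{r^{2-n}}{\sqrt{r^2+|\nabla_\tau r|^2}}-1,\qquad |V^\tau\circ\Phi_t|=|f|\,\frac{r^{1-n}\,|\nabla_\tau r|}{\sqrt{r^2+|\nabla_\tau r|^2}}.
\]
Since $\|r(t,\cdot)-1\|_{C^{2,\gamma}}\le\omega(\|\varphi\|_{C^{2,\gamma}})$, this gives $\|\psi_t\|_{C^{1,\gamma}}\le\omega(\|\varphi\|_{C^{2,\gamma}})$, and the tangential bound follows directly.

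As you noticed, this $\psi_t$ involves $\nabla_\tau r$, so it cannot be controlled in $C^{2,\gamma}$ in general. The $C^{2,\gamma}$ in the printed statement should therefore be read as $C^{1,\gamma}$, which suffices for the later estimates. That is the right fix, rather than changing the meaning of $\theta$.
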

For notational simplicity, throughout the paper we shall omit the subscript and write $V$ instead of $V_\varphi$. Therefore, by additionally assuming that
\begin{itemize}
    \item $j$ is of class $C^2$;
    \item $j'$ is positive in $(0,\delta)$ for some $\delta\geq\frac{1}{2n}$;
    \item $j''$ is bounded.
\end{itemize}
Our main result states as follows.
\begin{theorem}[Stability for nearly spherical sets]\label{MainTh}
    Let $0<\gamma\leq1$. Then there exist $\delta_c=\delta_c(n,\gamma,j)$ and $C(n,j)>0$ such that if $\Omega$ is a nearly spherical set of class $C^{2,\gamma}$ parametrized by $\varphi$ with 
    \begin{equation*}
        \|\varphi\|_{C^{2,\gamma}}\leq\delta_c,\quad \abs{\Omega}=\abs{B_1}, \quad \text{and}\quad x_\Omega=0,
    \end{equation*}
    then 
    \begin{equation*}
        \mathcal{J}(B_1)-\mathcal{J}(\Omega)\geq C(n,j)\|\varphi\|_{H^{1/2}(\partial B_1)}^2.
    \end{equation*}
\end{theorem}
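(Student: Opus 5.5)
The plan follows the Brasco--De Philippis--Velichkov scheme. Write $\Omega_t=\Phi_t(B_1)$ with $\Phi_t$ the flow from Lemma \ref{lemmaA.1}, and set $f(t)=\mathcal J(\Omega_t)$. Taylor's formula gives
\begin{equation*}
\mathcal J(B_1)-\mathcal J(\Omega)=-f'(0)-\int_0^1(1-t)f''(t)\,dt .
\end{equation*}
The proof then has three steps. First, show $f'(0)=0$. Second, show that $-f''(0)$ is a coercive quadratic form in $V\cdot\nu_0$ with respect to the $H^{1/2}$-norm, on functions with zero mean and no first spherical harmonics. Third, show that $|f''(t)-f''(0)|\le\omega(\|\varphi\|_{C^{2,\gamma}})\|\varphi\|_{H^{1/2}}^2$. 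With these, Lemma \ref{lemmaA.1}(iii) replaces $V\cdot\nu_0$ by $\varphi$ up to a small error, and the estimate follows once $\delta_c$ is small.

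\textbf{First variation and adjoint state.} Let $u_t$ be the torsion function of $\Omega_t$ and $u'$ its shape derivative. Then $-\Delta u'=0$ in $\Omega_t$ and $u'=-\partial_\nu u_t\,(V\cdot\nu)$ on $\partial\Omega_t$. Since $j(0)=0$,
\begin{equation*}
f'(t)=\int_{\Omega_t}j'(u_t)u'\,dx .
\end{equation*}
To remove $u'$, introduce the adjoint state $w_t$ solving $-\Delta w_t=j'(u_t)$ in $\Omega_t$ with $w_t=0$ on $\partial\Omega_t$. Green's formula then gives
\begin{equation*}
f'(t)=\int_{\partial\Omega_t}\partial_\nu u_t\,\partial_\nu w_t\,V\cdot\nu\,d\mathcal H^{n-1}.
\end{equation*}
On $B_1$ we have $u=(1-|x|^2)/(2n)$, and $w$ is radial. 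Hence $\partial_\nu u\,\partial_\nu w$ is constant on $\partial B_1$. Because $\int_{\partial B_1}V\cdot\nu_0=0$ (volume preservation and $\mathrm{div}\,V=0$ near $\partial B_1$), this yields $f'(0)=0$. The hypotheses enter here: $u\le 1/(2n)\le\delta$, so $j'(u)>0$ in $B_1$. This makes $w$ positive and strictly decreasing in $r$, with $-\partial_r w(1)=\frac1{n}\int_{B_1}j'(u)\,dx>0$.

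\textbf{Second variation at the ball.} Differentiate $f'(t)$ once more, exploiting that the flow is autonomous and divergence-free near the boundary. As in \cite{BDPV}, the terms involving the tangential part of $V$ and the normal acceleration cancel or reduce to a multiple of $\int(V\cdot\nu_0)^2$. The result should be
\begin{equation*}
f''(0)=\int_{\partial B_1}\Big(\partial_\nu u\,\partial_\nu w'+\partial_\nu w\,\partial_\nu u'+c_{u,w}\,g\Big)g\,d\mathcal H^{n-1},\qquad g=V\cdot\nu_0 .
\end{equation*}
Here $c_{u,w}$ is a constant built from $\partial_\nu u$, $\partial_{\nu\nu}u$, $\partial_\nu w$, $\partial_{\nu\nu}w$ and the mean curvature. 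The harmonic function $u'$ has boundary datum $g/n$. The function $w'$ solves $-\Delta w'=j''(u)u'$ in $B_1$ with $w'=-\partial_\nu w\,g$ on $\partial B_1$.

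Expand $g=\sum_k g_k$ in spherical harmonics of degree $k$. Then $u'=\frac1n\sum_k r^k g_k$, while $w'$ decomposes mode by mode into ODE solutions $r\mapsto a_k(r)$. So $f''(0)=\sum_k\mu_k\|g_k\|^2_{L^2}$. Mode $k=0$ is absent, and for $k=1$ one uses $x_\Omega=0$ to show $g_1$ is quadratically small in $\varphi$. It remains to prove $\mu_k\le -c\,k$ for $k\ge2$, which gives
\begin{equation*}
-f''(0)\ge c\|g\|_{H^{1/2}}^2-\text{(error)} .
\end{equation*}
This sign analysis is the main obstacle. The $u$-part reproduces the Saint-Venant computation, of order $-k$. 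The $j''$-part comes from the interior source $j''(u)u'$ and has the wrong sign, since $j''\ge0$. I would bound it using the maximum principle and the comparison $r^k\le r^2$, together with the boundedness of $j''$ and the explicit formula for $w$. The aim is to show it is of lower order, $O(1)$ in $k$ or smaller than the leading term. If low modes resist, I would fall back on convexity via Talenti to obtain $\mu_k<0$ for each $k\ge2$, and combine this with the asymptotics for large $k$.

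\textbf{Continuity of the second derivative.} Finally, write $f''(t)$ as the analogous expression on $\Omega_t$ and pull it back to $B_1$ via $\Phi_t$. Schauder estimates for $u_t$ and $w_t$ use $\|\Phi_t-Id\|_{C^{2,\gamma}}\le\omega_e$ and the fact that $j\in C^2$ with $j''$ bounded. Combined with the relation $(V\cdot\nu)\circ\Phi_t=g(1+\psi_t)$ from Lemma \ref{lemmaA.1}(iii), this gives $|f''(t)-f''(0)|\le\omega(\delta_c)\|g\|^2_{H^{1/2}}$. The $H^{1/2}$ control of the Dirichlet-to-Neumann type terms $\partial_\nu u'$ and $\partial_\nu w'$ comes from standard trace and duality estimates, exactly as in \cite{BDPV}.
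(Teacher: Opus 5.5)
Your overall architecture is the paper's: the adjoint state to get $f'(0)=0$, the Taylor expansion along the flow of Lemma \ref{lemmaA.1}, improved continuity of $f''$ as in \cite{BDPV}, diagonalization in spherical harmonics, and control of the modes $k\le1$ through the volume and barycenter constraints. The structural form of $f''(0)$ you predict is also right. Making your constant explicit gives $c_{u,w}=\frac{j'(0)-(n-2)\gamma_j}{n}$, with $\gamma_j=\abs{\nabla p_0}=\frac{1}{n\omega_n}\int_{B_1}j'(u_0)\,dx$, and then $\mu_k=-A_k$, where
\[
A_k=\frac{2\gamma_j k}{n}+\frac{(n-2)\gamma_j-j'(0)}{n}-\frac{I_k}{n^2},\qquad I_k=\int_0^1 j''\Big(\frac{1-r^2}{2n}\Big)r^{2k+n-1}\,dr .
\]
The genuine gap is the step you flag as the main obstacle, namely $A_k\ge ck$ for all $k\ge2$, and neither of your routes closes it.

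Estimates of the $j''$-term that only see $\norma{j''}_{L^\infty}$ (H\"older, maximum principle) give $I_k\le\norma{j''}_{L^\infty}/(2k+n)$. This is harmless for large $k$. At $k=2$, however, it beats $\frac{(n+2)\gamma_j-j'(0)}{n}$ only if $\norma{j''}_{L^\infty}$ is small relative to $\gamma_j$. That is the kind of extra hypothesis imposed in \eqref{BoundAltoj} of Appendix \ref{alternative}, not in the theorem. For $j(s)=s^p$, which is admissible, one has $\norma{j''}_{L^\infty(0,1/2n)}/\gamma_j\to\infty$ as $p\to\infty$, so such bounds really do fail. Your fallback is also invalid. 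Maximality of the ball only yields $f''(0)\le0$, i.e.\ $\mu_k\le0$. Strict negativity does not follow, since a maximizer can be degenerate at second order.

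What is missing is to use the exact structure of $A_k$ instead of estimating it. Since $j''\ge0$ and $r^{2k+n-1}$ decreases in $k$, $I_k$ is nonincreasing; this is the right use of your comparison $r^k\le r^2$. Hence $A_{k+1}-A_k\ge 2\gamma_j/n$, and it suffices to control one low mode exactly. From $\frac{d}{dr}j'(u_0(r))=-\frac{r}{n}j''(u_0(r))$, an integration by parts gives
\[
A_2=\frac{n+2}{n}\int_0^1 j'(u_0(r))\,(r^{n-1}-r^{n+1})\,dr>0,
\]
which is the paper's argument. Alternatively, the same integration by parts gives $I_1=n^2\gamma_j-nj'(0)$, so $A_1=0$ (translation invariance) and $A_k\ge 2\gamma_j(k-1)/n$ for $k\ge2$. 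With this monotonicity, even the non-strict information $\mu_1\le0$ from maximality would suffice.

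This is also where the hypothesis $j'>0$ on $(0,1/(2n))$ actually enters: through $\gamma_j>0$ and $A_2>0$. It is not needed for $f'(0)=0$, as your first paragraph suggests; that identity holds whatever the sign of $j'$, because $\partial_\nu u_0\,\partial_\nu p_0$ is constant on $\partial B_1$. A minor slip: $-\partial_r w(1)=\frac{1}{n\omega_n}\int_{B_1}j'(u)\,dx$, not $\frac{1}{n}\int_{B_1}j'(u)\,dx$.
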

We emphasize that, although our result is restricted to the class of nearly spherical sets, the stability deficit obtained in terms of the $H^{1/2}$-norm captures both volumetric and oscillatory information, since it controls not only the size of the perturbation but also the oscillations of the boundary.

The main difference with respect to \cite{BDPV}, which corresponds to the special case $j(s)=s$, lies in the structure of the second variation. Since the functional considered here is no longer simply the torsional rigidity, its second shape derivative contains additional terms depending on the second material derivative of the state function. These terms have to be either controlled or eliminated from the expression of the second variation. For this purpose, we introduce an adjoint state, namely the solution of a suitable auxiliary boundary value problem, which allows us to remove precisely this dependence and to obtain a tractable second-order expansion.

The paper is organized as follows. In Section~\ref{Notations} we introduce the notation and the main analytical tools. Section~\ref{derivatives} is devoted to the computation of first and second order shape derivatives. Finally, in Section~\ref{stability} we prove improved continuity and coercivity which are the main tools in the proof of Theorem \ref{MainTh}. Moreover, in the Appendix~\ref{listafunzione} we exhibit some functionals for which our stability result applies.

\section{Notations and Preliminaries}\label{Notations}
\subsection{Rearrangements and comparison principles}

Throughout this section, $\Omega\subset\mathbb{R}^n$ denotes a measurable set with finite measure. We write $\abs{E}$ for the Lebesgue measure of a measurable set $E$ and $\omega_n$ for the measure of the unit ball in $\mathbb{R}^n$. If $E\subset\mathbb{R}^n$ is measurable, we denote by $E^\sharp$ the open ball centered at the origin such that
\[
    \abs{E^\sharp}=\abs{E}.
\]

We recall the basic notation concerning rearrangements. 

\begin{definizione}[Decreasing rearrangement]
Let $u:\Omega\to\mathbb{R}$ be a measurable function. Its distribution function is defined by
\[
    \mu_u(t):=\abs{\{x\in\Omega:\abs{u(x)}>t\}},\qquad t\geq 0.
\]
The decreasing rearrangement of $u$ is the function $u^\ast:(0,\abs{\Omega})\to[0,+\infty]$ given by
\[
    u^\ast(s):=\inf\{t\geq 0:\mu_u(t)\leq s\}.
\]
Equivalently, $u^\ast$ is the unique nonincreasing, right-continuous function which is equimeasurable with $\abs{u}$.
\end{definizione}

For a comprehensive account of the main properties of rearrangements and their applications to variational problems, we refer to \cite{K}.

Rearrangement techniques are a classical tool in the proof of sharp geometric inequalities. In particular, they were used by P\'olya to prove Saint-Venant's inequality for the torsional rigidity \cite{P}, which states that, among all domains with prescribed measure, the ball maximizes the torsional rigidity. A different and very influential approach was later introduced by Talenti \cite{T}. More precisely, let $u$ be the solution of the torsion problem
\[
    \begin{cases}
        -\Delta u=1 & \text{in }\Omega,\\
        u=0 & \text{on }\partial\Omega,
    \end{cases}
\]
and let $v$ be the solution of the corresponding problem in the ball $\Omega^\sharp$. Talenti's comparison principle gives the sharp pointwise estimate
\[
    u^\ast(s)\leq v^\ast(s),\qquad s\in(0,\abs{\Omega}).
\]
In particular, this implies the integral comparison
\[
    \int_\Omega u\,dx=\int_0^{\abs{\Omega}}u^\ast(s)\,ds\leq\int_0^{\abs{\Omega}}v^\ast(s)\,ds=\int_{\Omega^\sharp}v\,dx,
\]
which is precisely Saint-Venant's inequality.

We shall use the following standard characterization of comparison by mass concentration; see, for instance, \cite{ALT}.

\begin{lemma}
Let $f,g\in L^1(\Omega)$ be nonnegative functions. Then the following conditions are equivalent:
\begin{enumerate}
    \item for every $t\in[0,\abs{\Omega}]$,
    \[
        \int_0^t f^\ast(s)\,ds\leq\int_0^t g^\ast(s)\,ds;
    \]
    
    \item for every nonnegative $\varphi\in L^\infty(\Omega)$,
    \[
        \int_\Omega f\varphi\,dx\leq\int_0^{\abs{\Omega}} g^\ast(s)\varphi^\ast(s)\,ds;
    \]
    
    \item for every nonnegative $\varphi\in L^\infty(\Omega)$,
    \[
        \int_0^{\abs{\Omega}} f^\ast(s)\varphi^\ast(s)\,ds\leq\int_0^{\abs{\Omega}} g^\ast(s)\varphi^\ast(s)\,ds;
    \]
    
    \item for every nonnegative Lipschitz convex function $F$ such that $F(0)=0$,
    \[
        \int_\Omega F(f)\,dx\leq\int_\Omega F(g)\,dx.
    \]
\end{enumerate}
\end{lemma}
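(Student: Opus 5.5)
We prove the cycle of implications $1\Rightarrow 3\Rightarrow 2\Rightarrow 1$ and then the equivalence $1\Leftrightarrow 4$. The basic tool is Hardy--Littlewood's inequality $\int_\Omega f\varphi\,dx\leq\int_0^{\abs{\Omega}}f^\ast\varphi^\ast\,ds$. The other ingredient is the layer-cake representation of a nonnegative nonincreasing function, $\varphi^\ast(s)=\int_0^\infty \chi_{\{\varphi^\ast>\tau\}}(s)\,d\tau$, where each superlevel set $\{\varphi^\ast>\tau\}$ is an interval $(0,t_\tau)$.

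For $1\Rightarrow 3$, insert the layer-cake formula into $\int_0^{\abs{\Omega}} f^\ast\varphi^\ast\,ds$ and use Fubini. This gives
\[
\int_0^{\abs{\Omega}} f^\ast\varphi^\ast\,ds=\int_0^\infty\int_0^{t_\tau}f^\ast(s)\,ds\,d\tau .
\]
Applying 1 inside the $\tau$-integral yields the same expression with $g^\ast$, hence 3. Then $3\Rightarrow 2$ follows at once from Hardy--Littlewood applied to $\int_\Omega f\varphi$. For $2\Rightarrow 1$, fix $t$ and choose a set $E\subset\Omega$ with $\abs{E}=t$ and $\int_E f=\int_0^t f^\ast$. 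Such a set exists: take $\{f>f^\ast(t)\}$ and add a portion of $\{f=f^\ast(t)\}$, using that Lebesgue measure is nonatomic. Testing 2 with $\varphi=\chi_E$, whose rearrangement is $\chi_{(0,t)}$, gives 1.

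For $1\Rightarrow 4$, note that $\int_\Omega F(f)=\int_0^{\abs{\Omega}}F(f^\ast)$ by equimeasurability. A convex Lipschitz $F\geq0$ with $F(0)=0$ is nondecreasing on $[0,\infty)$: otherwise convexity would force it to become negative just to the right of $0$ along a decreasing chord. So it suffices to treat $F(r)=\int_0^\infty (r-\tau)_+\,d\mu(\tau)+cr$, where $c=F'(0^+)\geq0$ and $\mu=F''\geq0$. This representation holds on $[0,\infty)$, which is all we need since $f,g\geq0$. By linearity and monotone convergence, it is enough to prove
\[
\int_0^{\abs{\Omega}}(f^\ast-\tau)_+\,ds\leq\int_0^{\abs{\Omega}}(g^\ast-\tau)_+\,ds
\]
for each $\tau$, together with $\int f^\ast\leq\int g^\ast$; the latter is 1 at $t=\abs{\Omega}$. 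Write $\int_0^{\abs{\Omega}}(f^\ast-\tau)_+\,ds=\sup_{t}\bigl(\int_0^t f^\ast-\tau t\bigr)$. The supremum is attained at $t=\mu_f(\tau)$, and each quantity inside it is dominated termwise by the corresponding one for $g$, thanks to 1.

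For $4\Rightarrow1$, fix $t$ and set $\tau=g^\ast(t)$, taking $F(r)=(r-\tau)_+$. Then
\[
\int_0^t f^\ast\leq\int_0^t (f^\ast-\tau)_++\tau t\leq\int_0^{\abs{\Omega}}(f^\ast-\tau)_++\tau t .
\]
By 4 this is at most $\int_0^{\abs{\Omega}}(g^\ast-\tau)_++\tau t=\int_0^t g^\ast$. The last equality holds because $g^\ast\geq\tau$ on $(0,t)$ and $g^\ast\leq\tau$ after $t$.

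The main technical point is the reduction of a general $F$ to the truncations $(r-\tau)_+$. This requires care with the integral representation when $\mu$ has infinite mass near $\infty$, which is handled by approximating $F$ by its restriction to $[0,M]$ extended linearly, followed by monotone convergence. One must also check that the constructions are valid when $\abs{\Omega}=\infty$; this case is excluded here, since $\Omega$ has finite measure.
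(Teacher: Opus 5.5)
The paper does not prove this lemma; it quotes it as a standard fact with a reference to \cite{ALT}. Your argument is correct and self-contained, and it follows the classical Hardy--Littlewood--P\'olya route by which this equivalence is usually proved. For $1\Rightarrow3\Rightarrow2$ you use the layer-cake formula together with the Hardy--Littlewood inequality. For $2\Rightarrow1$ you use an extremal set $E$. For $1\Leftrightarrow4$ you decompose $F$ into $cr$ plus truncations $(r-\tau)_+$, and use the identity $\int_0^{\abs{\Omega}}(f^\ast-\tau)_+=\sup_t\bigl(\int_0^t f^\ast-\tau t\bigr)$.

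A few steps deserve one more line each, but none is a gap:

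(i) The existence of $E$ in $2\Rightarrow1$ uses $\abs{\{f>f^\ast(t)\}}\le t\le\abs{\{f\ge f^\ast(t)\}}$. The first inequality comes from right-continuity of $\mu_f$, the second from the definition of $f^\ast(t)$ as an infimum. The identity $\int_E f=\int_0^t f^\ast$ then uses that $f^\ast\equiv f^\ast(t)$ on $[\mu_f(f^\ast(t)),t]$.

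(ii) In $4\Rightarrow1$, $g^\ast(t)$ is defined only for $t\in(0,\abs{\Omega})$, where it is finite because $g\in L^1$. The endpoint $t=\abs{\Omega}$ is handled by taking $\tau=0$, i.e.\ $F(r)=r$.

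(iii) Your concern about $\mu$ having infinite mass does not arise. Since $F$ is Lipschitz, $F'_+$ is bounded, so $\mu((0,\infty))\le \mathrm{Lip}(F)-F'(0^+)<\infty$. Tonelli applies in any case, because all integrands are nonnegative. Take $\mu$ as a measure on $(0,\infty)$, with the behaviour at $0$ absorbed into $c$. The sign $c\ge0$, which you correctly derive from monotonicity of $F$, is exactly what is needed, since condition 1 controls $\int f^\ast\le\int g^\ast$ in one direction only.
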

In this terminology, Talenti's theorem asserts that the solution of the torsion problem in a general domain is less concentrated than the corresponding radially symmetric solution in the ball of the same measure. In the applications below, this will provide the maximality of the ball for the functional $\mathcal{J}(\Omega)$.
\subsection{Harmonic extension and spherical harmonics}
\begin{definizione}
Given a function $\varphi:\partial B_1\to \R$ we define
\[
\|\varphi\|_{H^{1/2}(\partial B_1)}^2:=\int_{\partial B_1} \varphi^2 \,\dH+\int_{B_1} |\nabla H(\varphi)|^2\,dx,
\]
where $H(\varphi)$ is the $W^{1,2}$ harmonic extension of $\varphi$, i.e.
\[
    \begin{cases}
        -\Delta H(\varphi)=0 & \text{ in } B_1\\
        H(\varphi)=\varphi & \text{ on } \partial B_1.
    \end{cases}
\]
\end{definizione}
In particular $H^{1/2}(\partial B_1)$, equipped with this norm, is a Hilbert space. We also recall that the trace Poincaré--Wirtinger inequality.
\[
\int_{\partial B_1}\abs{w-\int_{\partial B_1}w}^2\,\dH\leq\int_{B_1} \abs{\nabla w}^2\,dx,\qquad w\in W^{1,2}(B_1),
\]
implies
\[
\norma{\nabla H(\varphi)}_{L^2(B_1)}\leq\norma{\varphi}_{H^{1/2}(\partial B_1)}\leq\sqrt{2}\norma{\nabla H(\varphi)}_{L^2(B_1)}\quad\text{for every } \varphi \text{ s.t. } \int_{\partial B_1}\varphi\,\dH=0.
\]

We now recall some basic facts about spherical harmonics; these functions form a Hilbert basis of $L^2(\partial B_1)$, and hence also provides the standard spectral decomposition in $H^{1/2}(\partial B_1)$ (see, for instance, \cite[Remark 4.1]{BDR}).
In our setting, assuming that $\xi$ is orthogonal to spherical harmonics of degree $0$ and $1$, we may write
\begin{equation*}
    \xi(\theta)=\sum_{k\geq2}c_k Y_k(\theta)\quad\text{where }\theta\in\mathbb{S}^{n-1}.
\end{equation*}
Its harmonic extension is then given by
\begin{equation*}
    H(\xi)(x)=\sum_{k\geq2}c_k\abs{x}^kY_k\left(\frac{x}{\abs{x}}\right)\quad\text{where }x\in B_1.
\end{equation*}
By orthonormality, it follows that
\begin{equation*}
\int_{\partial B_1}\xi^2\,\dH=\sum_{k\geq2} c_{k}^2.
\end{equation*}
Moreover, using polar coordinates, we obtain
\begin{equation*}
\int_{B_1}H(\xi)^2\,dx
=\sum_{k\geq2} c_{k}^2\int_0^1 r^{2k+n-1}\,dr=\sum_{k\geq2}\frac{c_{k}^2}{2k+n}\leq\frac{1}{n}\int_{\partial B_1}\xi^2\,\dH.
\end{equation*}
Finally, since $H(\xi)$ is harmonic, its Dirichlet energy can be expressed in terms of the Steklov eigenvalues:
\[
\int_{B_1} \abs{\nabla H(\xi)}^2\,dx=\sum_{k\geq2}\sigma_k(B_1)c_{k}^2=\sum_{k\geq2}kc_{k}^2,
\]
where the last equality follows from the fact that the Steklov eigenvalue associated with spherical harmonics of degree $k$ on the unit ball is precisely $k$.
\subsection{Adjoint state}
In this subsection, we show how the introduction of an auxiliary problem, namely the \emph{adjoint problem}, can sometimes simplify the expression of the derivatives of shape functionals. We explain this idea for functions that vanish on the boundary of $\Omega$, although the technique is not specific to Dirichlet boundary conditions (see \cite[\S 5.8]{HP} for a more comprehensive discussion of the topic).\\
The main idea is to eliminate the derivative $\dot{u}_t$ from the expression of the derivative of the functional. More precisely, when dealing with a function $u_t$ that vanishes on $\partial\Omega_t$, we obtain the following information on $\dot{u}_t|_{\partial\Omega_t}$:
\begin{equation*}
    \dot{u}_t=-\nabla u_t\cdot V \quad \text{ on } \partial\Omega_t,
\end{equation*}
where $V$ is the velocity vector field associated with the deformation under consideration. It then remains to handle the occurrences of $\dot{u}_t$ inside the integrals over $\Omega_t$. Having explained the idea and motivation behind the adjoint problem, let us now define it.\\
Consider the following shape functional:
\begin{equation*}
    g(t)=\mathcal{G}(\Omega_t)=\int_{\Omega_t} G(x,u_t(x),\nabla u_t(x))\,dx,
\end{equation*}
where $G=G(x,u,q)$ is a sufficiently regular function. Then, using Hadamard's formula, we obtain
\begin{equation}\label{SECADJ}
    g'(t)=\int_{\Omega_t}[\dot{u}_t\partial_u G+\nabla_q G\cdot\nabla\dot{u}_t]\,dx+\int_{\partial\Omega_t}G \, V\cdot\nu_t\,\dH.
\end{equation}
From a heuristic point of view, we now want to replace $\dot{u}_t$ with a test function $v$ in the expression of $g'(t)$ in order to derive a variational formulation for the equation satisfied by $p_t$. This leads to the following adjoint problem:
\begin{equation*}
    \begin{cases}
        -\Delta p_t+p_t=\partial_u G-\mathrm{div}_x(\nabla_q G) & \text{ in }\Omega_t,\\
        p_t=0 & \text{ on } \partial\Omega_t.
    \end{cases}
\end{equation*}
Multiplying the equation by $\dot{u}_t$ and integrating by parts, we obtain
\begin{equation*}
    -\int_{\partial\Omega_t}\dot{u}_t\frac{\partial p_t}{\partial\nu_t}\,\dH
    =
    \int_{\Omega_t}[\dot{u}_t\partial_uG+\nabla \dot{u}_t\cdot\nabla_q G]\,dx
    -\int_{\partial\Omega_t}\dot{u}_t\nabla_q G\cdot\nu_t\,\dH.
\end{equation*}
Using \eqref{SECADJ}, we then deduce
\begin{equation}\label{SECADJ2}
    g'(t)=\int_{\partial\Omega_t}(V\cdot\nu_t)\left[\frac{\partial u_t}{\partial\nu_t}\frac{\partial p_t}{\partial\nu_t}+G-\frac{\partial u_t}{\partial\nu_t}\nabla_q G\cdot\nu_t \right]\,\dH.
\end{equation}
Comparing \eqref{SECADJ} and \eqref{SECADJ2}, we see that all contributions involving $\dot{u}_t$ in $\Omega_t$ have been eliminated. Furthermore, this will allow us to exclude a priori any dependence on $\ddot{u}_t$ in the second derivative of the functional for terms defined on $\Omega$.
\section{First and Second variation}\label{derivatives}
Defining $f(t):=\mathcal{J}(\Omega(t))$, in the next results we compute the first and second variation of the shape functional.

Moreover, we emphasize that, here and in what follows, since all the functions under consideration are radial in $t=0$, whenever $\abs{\nabla\cdot}$ appears outside an integral, it is to be understood as the constant value it attains on $\partial B_1$.
\begin{lemma}[First Variation]
    Under the assumptions of Lemma \ref{lemmaA.1}, we have
    \begin{equation*}
        f'(t)=\int_{\partial\Omega(t)}\abs{\nabla u_t}\abs{\nabla p_t}(V\cdot\nu_t)\,d\mathcal{H}^{n-1}.
    \end{equation*}
\end{lemma}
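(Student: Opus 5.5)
We take as adjoint state the solution $p_t$ of the problem obtained from the general construction of the previous subsection with $G(x,u,q)=j(u)$, but adapted to the Dirichlet Laplacian (the operator governing $u_t$), that is
\begin{equation*}
    \begin{cases}
        -\Delta p_t=j'(u_t) & \text{in }\Omega(t),\\
        p_t=0 & \text{on }\partial\Omega(t).
    \end{cases}
\end{equation*}
Since $\nabla_q G=0$ and $j(u_t)=j(0)=0$ on $\partial\Omega(t)$, the general formula \eqref{SECADJ2} suggests that only the product of the normal derivatives should survive. The plan is to check this directly.

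\textbf{Step 1 (regularity and differentiability).} Since $\Omega(t)=\Phi_t(B_1)$ is of class $C^{2,\gamma}$ uniformly in $t$ by Lemma \ref{lemmaA.1}(iii), Schauder theory gives $u_t\in C^{2,\gamma}(\overline{\Omega(t)})$. Because $j\in C^2$, the right-hand side $j'(u_t)$ is $C^{1}$, so $p_t\in C^{2,\gamma}(\overline{\Omega(t)})$ as well. Differentiability in $t$ is obtained by pulling back to $B_1$ via $\Phi_t$: the transported functions $u_t\circ\Phi_t$ solve an elliptic problem on the fixed domain $B_1$ with coefficients smooth in $t$, and the implicit function theorem yields their differentiability. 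From this we get the shape derivative $\dot u_t$, which satisfies
\begin{equation*}
    \Delta\dot u_t=0 \ \text{in }\Omega(t),\qquad \dot u_t=-\nabla u_t\cdot V \ \text{on }\partial\Omega(t),
\end{equation*}
by differentiating the equation and the boundary condition $u_t\circ\Phi_t=0$.

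\textbf{Step 2 (Hadamard formula and elimination of $\dot u_t$).} Hadamard's formula gives
\begin{equation*}
f'(t)=\int_{\Omega(t)}j'(u_t)\dot u_t\,dx+\int_{\partial\Omega(t)}j(u_t)\,V\cdot\nu_t\,\dH,
\end{equation*}
and the boundary term vanishes because $j(0)=0$. In the remaining integral we replace $j'(u_t)$ by $-\Delta p_t$ and apply Green's second identity. The term involving $\Delta\dot u_t$ is zero by harmonicity, and the term involving $p_t$ on the boundary is zero since $p_t=0$ there. This leaves
\begin{equation*}
f'(t)=-\int_{\partial\Omega(t)}\dot u_t\,\frac{\partial p_t}{\partial\nu_t}\,\dH
=\int_{\partial\Omega(t)}\frac{\partial u_t}{\partial\nu_t}\frac{\partial p_t}{\partial\nu_t}\,(V\cdot\nu_t)\,\dH,
\end{equation*}
where in the last equality we used $\nabla u_t=\frac{\partial u_t}{\partial\nu_t}\nu_t$ on $\partial\Omega(t)$, which holds because $u_t$ vanishes there.

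\textbf{Step 3 (signs).} By the maximum principle and Hopf's lemma, $u_t>0$ in $\Omega(t)$ and $\partial_{\nu_t}u_t=-|\nabla u_t|<0$ on the boundary. Next, $j$ is convex, nonnegative and satisfies $j(0)=0$, so $j'\ge0$ on $[0,\infty)$. Moreover $j'>0$ on $(0,\delta)$, hence $j'(u_t)\ge0$ and $j'(u_t)\not\equiv0$. Applying Hopf's lemma again gives $p_t>0$ and $\partial_{\nu_t}p_t=-|\nabla p_t|$. The product of the two normal derivatives is therefore $|\nabla u_t||\nabla p_t|$, which is the stated formula.

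The main obstacle is Step 1: rigorously justifying the differentiability of $t\mapsto u_t$ and identifying the boundary condition of $\dot u_t$. I would handle it through the transport to $B_1$ described above, relying on the uniform $C^{2,\gamma}$ control of $\Phi_t$ provided by Lemma \ref{lemmaA.1}(iii).
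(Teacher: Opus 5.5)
Your proof is correct and follows the paper's argument. Both use Hadamard's formula, then the adjoint problem $-\Delta p_t=j'(u_t)$ with $p_t=0$ on $\partial\Omega(t)$, then Green's formula to turn $\int_{\Omega(t)}j'(u_t)\dot u_t\,dx$ into $-\int_{\partial\Omega(t)}\dot u_t\,\partial_{\nu_t}p_t\,\dH$. Your justification that the boundary term vanishes because $j(u_t)=j(0)=0$, and your sign argument giving $\partial_{\nu_t}u_t\,\partial_{\nu_t}p_t=|\nabla u_t||\nabla p_t|$, make explicit what the paper leaves implicit.
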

\begin{proof}
    Recalling the well-known \emph{Hadamard's formula}
\begin{equation*}
    \frac{d}{dt}\int_{\Omega(t)}f(x,t)\,dx=\int_{\Omega(t)} [f_t(x,t)+\mathrm{div}(fV)]\,dx,
\end{equation*}
we can compute the first derivative as follows
\begin{equation*}
    f'(t)=\int_{\Omega(t)}j'(u_t)\dot{u}_t\,dx+\int_{\partial\Omega(t)}j(u_t)(V\cdot \nu_t)\,d\mathcal{H}^{n-1}.
\end{equation*}
From the boundary data in \eqref{Problema(u)}, since the vector field $V$ is divergence-free in a neighborhood of $\partial B_1$ (where $\partial\Omega(t)$ is contained), we have that the boundary integral is zero. Looking at \eqref{Problema(u)} we remark that $\dot{u}$ solves the following problem
\begin{equation}\label{Problema(u.)}
    \begin{cases}
        -\Delta \dot{u}_t=0 & \text{ in }\Omega(t)\\
        \dot{u}_t=-\nabla u_t\cdot V & \text{ on } \partial\Omega(t).
    \end{cases}
\end{equation}
We want now to introduce an auxiliary problem, known in literature as \emph{adjoint problem} (see \S\ref{Notations} for more details), which will allow us to remove the dependence on $\dot{u}$ in $f'(t)$, and therefore remove the dependence on $\ddot{u}$ in $f''(t)$. More precisely we consider the following problem
\begin{equation}\label{Problema(p)}
    \begin{cases}
        -\Delta p_t= j'(u_t) & \text{ in }\Omega(t)\\
        p_t=0 & \text{ on } \partial\Omega(t).
    \end{cases}
\end{equation}
With this notation, we can rewrite $f'(t)$ as follows
\begin{equation*}
    f'(t)=\int_{\Omega(t)}-\Delta p_t \dot{u}_t\,dx=-\int_{\partial \Omega(t)}\dot{u}_t\nabla p_t\cdot\nu_t\,d\mathcal{H}^{n-1}=\int_{\partial\Omega(t)}\abs{\nabla u_t}\abs{\nabla p_t}(V\cdot\nu_t)\,d\mathcal{H}^{n-1},
\end{equation*}
where we have applied \emph{Green's formula} and \eqref{Problema(u.)}-\eqref{Problema(p)}.
\end{proof}
 \begin{oss}
     Recalling that $\Omega(0)=B_1$ and $u_0$, $p_0$ are radial functions, we have that
    \begin{equation*}
        f'(0)=\abs{\nabla u_0}\abs{\nabla p_0}\int_{\partial B_1}V\cdot \nu_0\,d\mathcal{H}^{n-1}=0,
    \end{equation*}
    which is not surprising since, from Talenti's inequality, we already know that the ball is a maximizer for $\mathcal{J}(\cdot)$.
 \end{oss}

\begin{lemma}[Second Variation]
    Under the assumptions of Lemma \ref{lemmaA.1}, we have
    \begin{equation}\label{dersec(3)}
    \begin{split}
        f''(t)=&-2\int_{\Omega(t)}\nabla \dot{u}_t\cdot\nabla\dot{p}_t \,dx+\int_{\Omega(t)}j''(u_t)\dot{u}_t^2\,dx+\int_{\partial\Omega(t)}\frac{\abs{\nabla p_t}}{\abs{\nabla u_t}}\langle D^2u_t.\nabla u_t,V^\tau\rangle(V\cdot\nu_t)\,d\mathcal{H}^{n-1}\\
        &+\int_{\partial\Omega(t)}\frac{\abs{\nabla u_t}}{\abs{\nabla p_t}}\langle D^2p_t.\nabla p_t,V^\tau\rangle(V\cdot\nu_t)\,d\mathcal{H}^{n-1}+\int_{\partial\Omega(t)}\abs{\nabla p_t}(V\cdot\nu_t)^2\,d\mathcal{H}^{n-1}\\
        &+\int_{\partial\Omega(t)}\abs{\nabla u_t}j'(u_t)(V\cdot\nu_t)^2\,d\mathcal{H}^{n-1}-2\int_{\partial\Omega(t)}\abs{\nabla u_t}\abs{\nabla p_t}\mathscr{H}_{\partial\Omega(t)}(V\cdot\nu_t)^2\,d\mathcal{H}^{n-1}.
    \end{split}
\end{equation}
\end{lemma}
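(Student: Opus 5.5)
The plan is to differentiate the expression of $f'(t)$ from the First Variation lemma once more. I will not differentiate the boundary integral directly. Instead I go back to the volume form
\[
f'(t)=-\int_{\partial\Omega(t)}\dot u_t\,\frac{\partial p_t}{\partial\nu_t}\,\dH=\int_{\Omega(t)}\bigl(-\Delta p_t\,\dot u_t+\Delta\dot u_t\,p_t\bigr)\,dx .
\]
This rests on two facts: $\dot u_t$ is harmonic, and $p_t=0$ on $\partial\Omega(t)$. I then rewrite $f'(t)$ as a bulk integral plus a boundary integral of quantities built from $u_t,p_t$ and their first derivatives. A convenient symmetric choice is
\[
f'(t)=\int_{\partial\Omega(t)}\frac{\partial u_t}{\partial\nu_t}\frac{\partial p_t}{\partial\nu_t}(V\cdot\nu_t)\,\dH=\int_{\partial\Omega(t)}(\nabla u_t\cdot\nabla p_t)\,(V\cdot\nu_t)\,\dH .
\]
This holds because both gradients are normal on $\partial\Omega(t)$ and both normal derivatives are negative (interior sign: $u_t,p_t>0$ inside, since $j'\ge0$ near $0$ and $u_t$ is small). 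Since $\mathrm{div}\,V=0$ near $\partial B_1$, I can extend $V\cdot\nu_t$ by $V$ and apply the divergence theorem. This gives
\[
f'(t)=\int_{\Omega(t)}\mathrm{div}\bigl((\nabla u_t\cdot\nabla p_t)V\bigr)\,dx .
\]
I localize with a cutoff supported in the $\delta_e$-neighborhood where $\mathrm{div}\,V=0$, so the integrand is $V\cdot\nabla(\nabla u_t\cdot\nabla p_t)$ there.

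I then apply Hadamard's formula again. The time derivative of the integrand produces the shape derivatives $u_t',p_t'$, which on the boundary coincide with $\dot u_t,\dot p_t$. These give terms like $\int_{\partial\Omega}(\nabla\dot u_t\cdot\nabla p_t+\nabla u_t\cdot\nabla\dot p_t)(V\cdot\nu)$. I convert these back into bulk terms by Green's formula, using the equations. First, $\dot u_t$ is harmonic with trace $-\nabla u_t\cdot V$. Second, differentiating \eqref{Problema(p)} shows that $\dot p_t$ solves $-\Delta\dot p_t=j''(u_t)\dot u_t$ with trace $-\nabla p_t\cdot V$. Pairing these gives
\[
\int_\Omega\nabla\dot u_t\cdot\nabla\dot p_t=-\int_{\partial\Omega}(\nabla p_t\cdot V)\partial_\nu\dot u_t=\int_\Omega j''(u_t)\dot u_t^2-\int_{\partial\Omega}(\nabla u_t\cdot V)\partial_\nu\dot p_t .
\]
This is how the terms $-2\int\nabla\dot u_t\cdot\nabla\dot p_t+\int j''(u_t)\dot u_t^2$ arise, with no occurrence of $\ddot u_t$, which is the point of the adjoint state.

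The remaining boundary terms come from differentiating $\nabla u_t\cdot\nabla p_t$ along $V$, and I decompose $V=(V\cdot\nu_t)\nu_t+V^\tau$. For the normal component, $\partial_{\nu\nu}u=\Delta u-\mathscr H\,\partial_\nu u$ on the level set $\{u=0\}$, i.e.\ $\partial_{\nu\nu}u_t=-1+\mathscr H\abs{\nabla u_t}$. Likewise $\partial_{\nu\nu}p_t=-j'(u_t)+\mathscr H\abs{\nabla p_t}$. Hence
\[
\partial_\nu(\nabla u\cdot\nabla p)=\partial_{\nu\nu}u\,\partial_\nu p+\partial_\nu u\,\partial_{\nu\nu}p=\abs{\nabla p}+j'(u)\abs{\nabla u}-2\mathscr H\abs{\nabla u}\abs{\nabla p},
\]
which gives exactly the last three integrals of \eqref{dersec(3)}. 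For the tangential component, $V^\tau\cdot\nabla(\nabla u\cdot\nabla p)=\langle D^2u.\nabla p,V^\tau\rangle+\langle D^2p.\nabla u,V^\tau\rangle$. Writing $\nabla p=(\abs{\nabla p}/\abs{\nabla u})\nabla u$ on the boundary produces the two Hessian terms with the stated coefficients.

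The main obstacle is the bookkeeping. I must keep track of which boundary terms cancel between the Hadamard boundary contribution and the Green-formula conversion, keep the signs of normal derivatives consistent, and justify the regularity ($C^{2,\gamma}$ up to the boundary, from Lemma \ref{lemmaA.1}(iii) and Schauder estimates) needed to take traces of second derivatives and to differentiate under the integral. I will first carry out the computation in the volume form and check the coefficient $-2$ by evaluating the result at $j(s)=s$. In that case $p_t=u_t$, and the formula must reduce to the second variation of torsional rigidity in \cite{BDPV}.
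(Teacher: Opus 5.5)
Your proposal is correct and follows essentially the same route as the paper. You write $f'(t)$ as $\int_{\Omega(t)}\mathrm{div}(g_tV)\,dx$ and apply Hadamard's formula, using that $V$ is autonomous and divergence-free near $\partial B_1$. You split $V=V^\tau+(V\cdot\nu_t)\nu_t$ and use $\partial_{\nu\nu}g=\Delta g+\abs{\nabla g}\mathscr{H}_{\partial\Omega(t)}$ for the normal part. Finally, Green's formula turns the boundary terms with $\partial_{\nu_t}\dot u_t$ and $\partial_{\nu_t}\dot p_t$ into $-2\int_{\Omega(t)}\nabla\dot u_t\cdot\nabla\dot p_t\,dx+\int_{\Omega(t)}j''(u_t)\dot u_t^2\,dx$.

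The only real difference is that you extend the boundary density by $\nabla u_t\cdot\nabla p_t$ rather than the paper's $\abs{\nabla u_t}\abs{\nabla p_t}$. The two agree on $\partial\Omega(t)$, so the resulting boundary terms coincide. Your choice is slightly cleaner because it stays smooth at interior critical points. The opening detour through $\int_{\Omega(t)}(-\Delta p_t\,\dot u_t+\Delta\dot u_t\,p_t)\,dx$ is not needed.
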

\begin{proof}
  Since we want to apply the Hadamard's formula to $f'(t)$, we rewrite the boundary integral, using the divergence Theorem, as follows
\begin{equation*}
    f'(t)=\int_{\Omega(t)}\mathrm{div}(\abs{\nabla u_t}\abs{\nabla p_t}V)\,dx.
\end{equation*}
Then we can compute the second derivative
\begin{equation}\label{dersec(1)}
    f''(t)=\int_{\Omega(t)}\frac{\partial}{\partial t}\mathrm{div}(\abs{\nabla u_t}\abs{\nabla p_t}V)\,dx+\int_{\partial\Omega(t)}\mathrm{div}(\abs{\nabla u_t}\abs{\nabla p_t}V)(V\cdot\nu_t)\,d\mathcal{H}^{n-1}.
\end{equation}
Regarding the first integral, since $V$ is an autonomous vector field, applying again the divergence Theorem, we have that
\begin{equation}\label{parte1}
    \begin{split}
        \int_{\Omega(t)}\frac{\partial}{\partial t}\mathrm{div}(\abs{\nabla u_t}\abs{\nabla p_t}V)\,dx&=\int_{\Omega(t)}\mathrm{div}\left(\frac{\partial}{\partial t}(\abs{\nabla u_t}\abs{\nabla p_t})V\right)\,dx=\int_{\partial\Omega(t)}\frac{\partial}{\partial t}(\abs{\nabla u_t}\abs{\nabla p_t})(V\cdot\nu_t)\,d\mathcal{H}^{n-1}\\
        &=\int_{\partial\Omega(t)}\left[\abs{\nabla p_t}\frac{\nabla u_t}{\abs{\nabla u_t}}\cdot\nabla \dot{u}_t+\abs{\nabla u_t}\frac{\nabla p_t}{\abs{\nabla p_t}}\cdot\nabla \dot{p}_t \right](V\cdot\nu_t)\,d\mathcal{H}^{n-1}\\
        &=-\int_{\partial\Omega(t)}\left[\abs{\nabla p_t}\frac{\partial\dot{u}_t}{\partial\nu_t}+\abs{\nabla u_t}\frac{\partial\dot{p}_t}{\partial\nu_t} \right](V\cdot\nu_t)\,d\mathcal{H}^{n-1}.
    \end{split}
\end{equation}
Now we want to rewrite the second term in \eqref{dersec(1)}. For simplicity we recall that
\begin{equation*}
    \nabla(\abs{\nabla g})\cdot V=\frac{\langle D^2g.\nabla g, V\rangle}{\abs{\nabla g}}.
\end{equation*}
Then, recalling again that $\mathrm{div}V=0$ in a neighborhood of $\partial B_1$, the boundary integral in \eqref{dersec(1)} can be rewritten as follows
\begin{equation}\label{parte2}
    \begin{split}
        \int_{\partial\Omega(t)}\mathrm{div}(\abs{\nabla u_t}\abs{\nabla p_t}V)(V\cdot\nu_t)\,d\mathcal{H}^{n-1}&=\int_{\partial\Omega(t)}\abs{\nabla p_t}\frac{\langle D^2u_t.\nabla u_t, V\rangle}{\abs{\nabla u_t}}(V\cdot\nu_t)\,d\mathcal{H}^{n-1}\\
        &+\int_{\partial\Omega(t)}\abs{\nabla u_t}\frac{\langle D^2p_t.\nabla p_t, V\rangle}{\abs{\nabla p_t}}(V\cdot\nu_t)\,d\mathcal{H}^{n-1}.
    \end{split}
\end{equation}
Finally, combining \eqref{parte1}-\eqref{parte2}, we have that
\begin{equation*}
    \begin{split}
        f''(t)=&-\int_{\partial\Omega(t)}\left[\abs{\nabla p_t}\frac{\partial\dot{u}_t}{\partial\nu_t}+\abs{\nabla u_t}\frac{\partial\dot{p}_t}{\partial\nu_t} \right](V\cdot\nu_t)\,d\mathcal{H}^{n-1}+\int_{\partial\Omega(t)}\abs{\nabla p_t}\frac{\langle D^2u_t.\nabla u_t, V\rangle}{\abs{\nabla u_t}}(V\cdot\nu_t)\,d\mathcal{H}^{n-1}\\
        &+\int_{\partial\Omega(t)}\abs{\nabla u_t}\frac{\langle D^2p_t.\nabla p_t, V\rangle}{\abs{\nabla p_t}}(V\cdot\nu_t)\,d\mathcal{H}^{n-1}.
    \end{split}
\end{equation*}
We remark that, from \eqref{Problema(p)}, $\dot{p}_t$ satisfies
\begin{equation}\label{Problema(p.)}
    \begin{cases}
        -\Delta \dot{p}_t=j''(u_t)\dot{u}_t & \text{ in }\Omega(t)\\
        \dot{p}_t=-\nabla p_t\cdot V & \text{ on } \partial\Omega(t).
    \end{cases}
\end{equation}
Now we decompose $V=V^\tau+(V\cdot\nu_t)\nu_t$, and, using the boundary condition in \eqref{Problema(u.)}-\eqref{Problema(p.)}, we find the following
\begin{equation}\label{dersec(2)}
    \begin{split}
        f''(t)=&-\int_{\partial\Omega(t)}\left[\dot{p}_t\frac{\partial\dot{u}_t}{\partial\nu_t}+\dot{u}_t\frac{\partial\dot{p}_t}{\partial\nu_t} \right]\,d\mathcal{H}^{n-1}-\int_{\partial\Omega(t)}\abs{\nabla p_t}\langle D^2u_t.\nu_t,V^\tau\rangle(V\cdot\nu_t)\,d\mathcal{H}^{n-1}\\
        &-\int_{\partial\Omega(t)}\abs{\nabla p_t}\langle D^2u_t.\nu_t,\nu_t\rangle(V\cdot\nu_t)^2\,d\mathcal{H}^{n-1}-\int_{\partial\Omega(t)}\abs{\nabla u_t}\langle D^2p_t.\nu_t,V^\tau\rangle(V\cdot\nu_t)\,d\mathcal{H}^{n-1}\\
        &-\int_{\partial\Omega(t)}\abs{\nabla u_t}\langle D^2p_t.\nu_t,\nu_t\rangle(V\cdot\nu_t)^2\,d\mathcal{H}^{n-1}.
    \end{split}
\end{equation}
We remark that, whenever $g$ is a $C^2$ concave functions that vanish on the boundary of $\Omega(t)$, we have
\begin{equation}\label{infLapl}
    \langle D^2g.\nu_t,\nu_t\rangle=\Delta g+\abs{\nabla g}\mathscr{H}_{\partial\Omega(t)},
\end{equation}
where $\mathscr{H}_{\partial\Omega(t)}$ is the mean curvature of the boundary of $\Omega(t)$. Then, from \eqref{Problema(u)}-\eqref{Problema(p)}-\eqref{dersec(2)}-\eqref{infLapl}, we find
\begin{equation*}
    \begin{split}
        f''(t)=&-\int_{\partial\Omega(t)}\left[\dot{p}_t\frac{\partial\dot{u}_t}{\partial\nu_t}+\dot{u}_t\frac{\partial\dot{p}_t}{\partial\nu_t} \right]\,d\mathcal{H}^{n-1}-\int_{\partial\Omega(t)}\abs{\nabla p_t}\langle D^2u_t.\nu_t,V^\tau\rangle(V\cdot\nu_t)\,d\mathcal{H}^{n-1}\\
        &-\int_{\partial\Omega(t)}\abs{\nabla u_t}\langle D^2p_t.\nu_t,V^\tau\rangle(V\cdot\nu_t)\,d\mathcal{H}^{n-1}+\int_{\partial\Omega(t)}\abs{\nabla p_t}(V\cdot\nu_t)^2\,d\mathcal{H}^{n-1}\\
        &+\int_{\partial\Omega(t)}\abs{\nabla u_t}j'(u_t)(V\cdot\nu_t)^2\,d\mathcal{H}^{n-1}-2\int_{\partial\Omega(t)}\abs{\nabla u_t}\abs{\nabla p_t}\mathscr{H}_{\partial\Omega(t)}(V\cdot\nu_t)^2\,d\mathcal{H}^{n-1}.
    \end{split}
\end{equation*}

Finally, using the divergence Theorem on the first integral, since $u_t=p_t=0$ on $\partial\Omega(t)$, from \eqref{Problema(u.)}-\eqref{Problema(p.)}, we can rewrite $f''(t)$ as follows
\begin{equation*}
    \begin{split}
        f''(t)=&-2\int_{\Omega(t)}\nabla \dot{u}_t\cdot\nabla\dot{p}_t \,dx+\int_{\Omega(t)}j''(u_t)\dot{u}_t^2\,dx+\int_{\partial\Omega(t)}\frac{\abs{\nabla p_t}}{\abs{\nabla u_t}}\langle D^2u_t.\nabla u_t,V^\tau\rangle(V\cdot\nu_t)\,d\mathcal{H}^{n-1}\\
        &+\int_{\partial\Omega(t)}\frac{\abs{\nabla u_t}}{\abs{\nabla p_t}}\langle D^2p_t.\nabla p_t,V^\tau\rangle(V\cdot\nu_t)\,d\mathcal{H}^{n-1}+\int_{\partial\Omega(t)}\abs{\nabla p_t}(V\cdot\nu_t)^2\,d\mathcal{H}^{n-1}\\
        &+\int_{\partial\Omega(t)}\abs{\nabla u_t}j'(u_t)(V\cdot\nu_t)^2\,d\mathcal{H}^{n-1}-2\int_{\partial\Omega(t)}\abs{\nabla u_t}\abs{\nabla p_t}\mathscr{H}_{\partial\Omega(t)}(V\cdot\nu_t)^2\,d\mathcal{H}^{n-1}.
    \end{split}
\end{equation*}
\end{proof}
\section{Stability for nearly spherical sets}\label{stability}
In order to study the stability of critical shapes, one cannot rely on a positivity condition for the full second shape derivative with respect to arbitrary deformation fields. Indeed, shape functionals are invariant under reparametrizations of the domain: tangential deformations, or more generally diffeomorphisms leaving the set unchanged, do not modify the shape and therefore necessarily generate degenerate directions.

As in the first-order Hadamard structure theorem (see \cite{DL}), the relevant information is carried by the normal component of the perturbation. At a critical shape, the second variation reduces to a quadratic form acting on normal boundary deformations. Hence, in the class of nearly spherical sets, stability is obtained by proving coercivity of this quadratic form on the admissible perturbations, after removing the neutral directions associated with the invariances of the problem.

This coercivity estimate must be combined with an improved continuity property of the second variation. The latter ensures that the quadratic expansion remains uniform along nearly spherical perturbations and that the nonlinear remainder does not destroy the positivity given by the second variation. These two ingredients, improved continuity and coercivity, are therefore the key tools for deriving local stability results.

Hence, our first step is to evaluate $f''$ at $t=0$. In order to do this, we recall that $\dot{u}_0$ is exactly the harmonic extension $H(\nabla u_0\cdot V)$. Then our aim is to write $\dot{p}_0$, that is not harmonic, in terms of an harmonic extension. To this aim, we introduce the following Dirichlet problem
\begin{equation}\label{Problema(Q)}
    \begin{cases}
        -\Delta Q=j''(u_0)H(V\cdot\nu_0) & \text{ in } B_1\\
        Q=0 & \text{ on } \partial B_1.
    \end{cases}
\end{equation}
Then, since the harmonic extension satisfies
\begin{equation}\label{Problema(H)}
    \begin{cases}
        -\Delta H(V\cdot\nu_0)=0 & \text{ in } B_1\\
        H(V\cdot\nu_0)=\abs{\nabla u_0}V\cdot\nu_0 & \text{ on } \partial B_1,
    \end{cases}
\end{equation}
from \eqref{Problema(Q)}-\eqref{Problema(H)} we have that
\begin{equation*}
    \dot{p}_0=\abs{\nabla p_0}H(V\cdot\nu_0)+\abs{\nabla u_0}Q.
\end{equation*}
The last gives us
\begin{equation}\label{Grad(p.)}
    \nabla \dot{p}_0=\abs{\nabla p_0}\nabla H(V\cdot\nu_0)+\abs{\nabla u_0}\nabla Q.
\end{equation}
Then, from \eqref{dersec(3)}-\eqref{Grad(p.)}, since $V^\tau|_{\partial B_1}=0$, $\mathscr{H}_{\partial B_1}=n-1$, we have
\begin{equation*}
    \begin{split}
        f''(0)=&-\abs{\nabla u_0}\abs{\nabla p_0}\left\{2\int_{B_1}\abs{\nabla H(V\cdot\nu_0)}^2\,dx+\left[2(n-1)-\frac{1}{\abs{\nabla u_0}}-\frac{j'(0)}{\abs{\nabla p_0}} \right]\int_{\partial B_1}(V\cdot \nu_0)^2\,d\mathcal{H}^{n-1} \right\}\\
        &+\abs{\nabla u_0}^2\int_{B_1}j''(u_0)H(V\cdot\nu_0)^2\,dx.
    \end{split}
\end{equation*}
\subsection{Improved Continuity}
Our next step is to prove the \emph{improved continuity} of our functional, to do this we follow the guide of \cite[Lemma A.2]{BDPV}. This is the content of the next result.
\begin{lemma}[Improved Continuity]
    Let $\gamma\in(0,1]$, there exist $\delta_a=\delta_a(n,\gamma)$ and a modulus of continuity $\omega_a$ such that if $\Omega$, $\varphi$, $V$ and $\Phi_t$ are as in Lemma \ref{lemmaA.1} and $\|\varphi\|_{C^{2,\gamma}}\leq \delta_a$, then
    \begin{equation*}
        |f''(t)-f''(0)|\leq \omega_a(\|\varphi\|_{C^{2,\gamma}})\|V\cdot\nu_0\|_{H^{1/2}(\partial B_1)}^2.
    \end{equation*}
\end{lemma}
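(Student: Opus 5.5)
We follow the scheme of \cite[Lemma A.2]{BDPV}: pull every term of \eqref{dersec(3)} back to the fixed domain $B_1$ through the flow $\Phi_t$, and compare each pulled-back term with its value at $t=0$. Write $\xi:=V\cdot\nu_0$ on $\partial B_1$. By Lemma \ref{lemmaA.1}(iii), $\|\Phi_t-Id\|_{C^{2,\gamma}}\le\omega_e(\|\varphi\|_{C^{2,\gamma}})$, and $(V\cdot\nu_t)\circ\Phi_t=\xi(1+\psi_t)$ with $\|\psi_t\|_{C^{2,\gamma}}\le\omega_e$. Since $V^\tau$ vanishes on $\partial B_1$, the quantity $V^\tau\circ\Phi_t$ on $\partial B_1$ is small in $C^{1,\gamma}$. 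Moreover, because $(V\cdot\nu_t)\circ\Phi_t$ is a multiple of $\xi$, one can write it as $\xi$ times a small function, up to a bounded change in $\xi$.

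The first step treats the boundary terms. The states $u_t\circ\Phi_t$ and $p_t\circ\Phi_t$ solve uniformly elliptic problems on $B_1$ with $C^{1,\gamma}$ coefficients $\omega_e$-close to those of the Laplacian. For $p_t$ the right-hand side $j'(u_t\circ\Phi_t)$ is $C^{0,\gamma}$ with a uniform bound, since $j\in C^2$ and $j''$ is bounded. Schauder estimates therefore give
\[
\|u_t\circ\Phi_t-u_0\|_{C^{2,\gamma}(\overline B_1)}+\|p_t\circ\Phi_t-p_0\|_{C^{2,\gamma}(\overline B_1)}\le\omega(\|\varphi\|_{C^{2,\gamma}}).
\]
Here we use that $j'(u_t\circ\Phi_t)-j'(u_0)$ is small in $C^{0,\gamma}$, because $u_t\circ\Phi_t\to u_0$ in $C^{1}$ and $j'\in C^1$. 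Since $j'>0$ on $(0,\delta)$, Hopf's lemma gives $\abs{\nabla p_0}>0$ on $\partial B_1$, so the quotients $\abs{\nabla p_t}/\abs{\nabla u_t}$ are uniformly controlled. The mean curvatures and Jacobians are $\omega$-close as well. Each boundary integral in \eqref{dersec(3)} is then of the form $\int_{\partial B_1}a_t\,\xi^2(1+\psi_t)^2$ or $\int_{\partial B_1}b_t\,\xi^2(1+\psi_t)$, where $a_t-a_0$ is small in $L^\infty$ and $b_t$ is itself small (the tangential terms). Its difference from the $t=0$ value is therefore bounded by $\omega\|\xi\|_{L^2}^2\le\omega\|\xi\|_{H^{1/2}}^2$.

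The second step handles the volume terms $-2\int\nabla\dot u_t\cdot\nabla\dot p_t$ and $\int j''(u_t)\dot u_t^2$, and this is the main obstacle. These terms involve only $H^{1/2}$-type control of the boundary data, so smallness must be obtained in $H^1$ with a constant proportional to $\|\xi\|_{H^{1/2}}$. We split $\dot u_t=H_t+\dots$ on $\Omega(t)$ and pull back: $\tilde u_t:=\dot u_t\circ\Phi_t$ solves $-\mathrm{div}(A_t\nabla\tilde u_t)=0$ in $B_1$, with boundary datum $-(\nabla u_t\cdot V)\circ\Phi_t=\abs{\nabla u_t\circ\Phi_t}\,\xi(1+\psi_t)$ up to a small tangential correction. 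Here $\|A_t-I\|_{C^{1,\gamma}}\le\omega$.

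Comparing with $\dot u_0=H(\abs{\nabla u_0}\xi)$ through the energy estimate for $\tilde u_t-\dot u_0$ gives
\[
\|\nabla(\tilde u_t-\dot u_0)\|_{L^2}\le C\|A_t-I\|_{L^\infty}\|\nabla\dot u_0\|_{L^2}+C\|g_t\xi\|_{H^{1/2}},
\]
where $g_t$ is small in $C^{1,\gamma}$. We then use the multiplier estimate $\|g\xi\|_{H^{1/2}}\le C\|g\|_{C^{1}}\|\xi\|_{H^{1/2}}$. For $\dot p_t$ the pulled-back equation has the extra source $j''(u_t)\dot u_t$. Bounded $j''$ and continuity of $j''$, via dominated convergence along uniform smallness, give $\|j''(u_t\circ\Phi_t)-j''(u_0)\|_{L^\infty}\le\omega$, because $j''$ is uniformly continuous on the compact range of $u$. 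Together with Poincaré this controls the source difference in $H^{-1}$ by $\omega\|\xi\|_{H^{1/2}}$.

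Finally, writing each bilinear difference as $a\cdot b-a_0\cdot b_0=(a-a_0)\cdot b+a_0\cdot(b-b_0)$ and using the bounds $\|\nabla\dot u_0\|_{L^2},\|\nabla\dot p_0\|_{L^2}\le C\|\xi\|_{H^{1/2}}$ from \eqref{Grad(p.)}, we obtain $|f''(t)-f''(0)|\le\omega_a\|\xi\|_{H^{1/2}}^2$. The delicate point is the continuity of $j''\circ u_t$. If only boundedness of $j''$ were used, this term would fail to be small, so I would rely on $j\in C^2$, which makes $j''$ uniformly continuous on the bounded range of the torsion functions.
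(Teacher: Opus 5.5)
Your proposal is correct and follows the paper's proof essentially step for step: pull back \eqref{dersec(3)} through $\Phi_t$, use Schauder closeness of $u_t\circ\Phi_t$ and $p_t\circ\Phi_t$ and the factorization $(V\cdot\nu_t)\circ\Phi_t=\xi(1+\psi_t)$ for the boundary terms, and energy estimates for the pulled-back $\dot u_t,\dot p_t$ for the volume terms; the differences are that you derive the bound the paper imports from \cite[Formula A.24]{BDPV} via a Lipschitz-multiplier estimate in $H^{1/2}$, and that you make explicit that uniform continuity of $j''$, not mere boundedness, is what makes $\|j''(u_t\circ\Phi_t)-j''(u_0)\|_{L^\infty}$ small. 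One justification needs fixing: for the tangential terms you need the pointwise bound $|V^\tau\circ\Phi_t|\le\omega|\xi|$ (the paper's \eqref{StimaVTan}), which follows neither from $C^{1,\gamma}$-smallness of $V^\tau\circ\Phi_t$ nor from the normal component being proportional to $\xi$, but from the radial structure of the field built in \cite{BDPV}, for which the whole vector $V\circ\Phi_t$ on $\partial B_1$ is a bounded multiple of $\xi$.
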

\begin{proof}
    We start from \eqref{dersec(3)} and pull it back on $B_1$ through $\Phi_t$:
    \begin{equation*}
        \begin{split}
            f''(t)=&-2\int_{B_1}\nabla\dot{u}_t\cdot\nabla\dot{p}_t\,\circ\Phi_t\,\det{\nabla\Phi_t}\,dx+\int_{B_1}j''(u_t)\dot{u}_t^2\,\circ\Phi_t\,\det{\nabla\Phi_t}\,dx\\
            &+\int_{\partial B_1}\abs{\nabla p_t}(V\cdot\nu_t)^2\,\circ\Phi_t\,J^{\partial B_1}\Phi_t\,d\mathcal{H}^{n-1}+\int_{\partial B_1}\abs{\nabla u_t}j'(u_t)(V\cdot\nu_t)^2\,\circ\Phi_t\,J^{\partial B_1}\Phi_t\,d\mathcal{H}^{n-1}\\
            &+\int_{\partial B_1}\frac{\abs{\nabla p_t}}{\abs{\nabla u_t}}\langle D^2u_t.\nabla u_t,V^\tau\rangle(V\cdot\nu_t)\,\circ\Phi_t\,J^{\partial B_1}\Phi_t\,d\mathcal{H}^{n-1}\\
            &+\int_{\partial B_1}\frac{\abs{\nabla u_t}}{\abs{\nabla p_t}}\langle D^2p_t.\nabla p_t,V^\tau\rangle(V\cdot\nu_t)\,\circ\Phi_t\,J^{\partial B_1}\Phi_t\,d\mathcal{H}^{n-1}\\
            &-2\int_{\partial B_1}\abs{\nabla u_t}\abs{\nabla p_t}\mathscr{H}_{\partial\Omega(t)}(V\cdot\nu_t)^2\,\circ\Phi_t\,J^{\partial B_1}\Phi_t\,d\mathcal{H}^{n-1}\\
            =&I_1(t)+I_2(t)+I_3(t)+I_4(t)+I_5(t)+I_6(t)+I_7(t).
        \end{split}
    \end{equation*}
    By Lemma \ref{lemmaA.1} we get
    \begin{equation}\label{StimeL^inf}
        \|\mathscr{H}_{\partial\Omega(t)}-\mathscr{H}_{\partial B_1}\|_{L^\infty(\partial B_1)}+\|J^{\partial B_1}\Phi_t-1\|_{L^\infty(\partial B_1)}+\|\det{\nabla \Phi_t}-1\|_{L^\infty(B_1)}\leq\omega(\|\varphi\|_{C^{2,\gamma}}),
    \end{equation}
    where in the last equation and in the following $\omega$ denotes a modulus of continuity that will change during the proof.
    
    If we consider the equation satisfied by $u_t\circ\Phi_t$ and $p_t\circ\Phi_t$ in $B_1$, \emph{Schauder estimates} (see for instance \cite[Section 6.3]{GT}) give us
    \begin{equation}\label{Schauder}
        \|u_0-u_t\circ\Phi_t\|_{C^{2,\gamma}(\overline{B}_1)}\leq\omega(\|\varphi\|_{C^{2,\gamma}}),\quad \|p_0-p_t\circ\Phi_t\|_{C^{2,\gamma}(\overline{B}_1)}\leq\omega(\|\varphi\|_{C^{2,\gamma}}).
    \end{equation}

    From the estimate on the Jacobian in \eqref{StimeL^inf} and from \eqref{Schauder} we get
    \begin{equation}\label{I3}
        \begin{split}
            |I_3(t)-I_3(0)|&=\left|\int_{\partial B_1} \left[\abs{\nabla p_t}(V\cdot\nu_t)^2\,\circ\Phi_t\,J^{\partial B_1}\,\Phi_t-\abs{\nabla p_0}(V\cdot\nu_0)^2 \right]\,d\mathcal{H}^{n-1} \right|\\
            &\leq\left|\int_{\partial B_1}\abs{\nabla p_t}(V\cdot\nu_t)^2\,\circ\Phi_t\left(J^{\partial B_1}\,\Phi_t-1 \right)\,d\mathcal{H}^{n-1}\right|\\
            &+\left|\int_{\partial B_1}\left[\abs{\nabla p_t}(V\cdot\nu_t)^2\,\circ\Phi_t -\abs{\nabla p_0}(V\cdot\nu_0)^2\right]\,d\mathcal{H}^{n-1} \right| \\
            &\leq \omega(\|\varphi\|_{C^{2,\gamma}})\|V\cdot\nu_0\|_{H^{1/2}(\partial B_1)}^2,
        \end{split}
    \end{equation}
    where we have applied
    \begin{equation*}
        \left|(V\cdot\nu_t)\circ\Phi_t-V\cdot\nu_0 \right|\leq \omega(\|\varphi\|_{C^{2,\gamma}})\abs{V\cdot\nu_0},
    \end{equation*}
    which is true from Lemma \ref{lemmaA.1} (i). In the same spirit of \eqref{I3}, since $j'(u_t)=-\Delta p_t$ from \eqref{Problema(p)}, we get
    \begin{equation*}
        \abs{I_4(t)-I_4(0)}\leq \omega(\|\varphi\|_{C^{2,\gamma}})\|V\cdot\nu_0\|_{H^{1/2}(\partial B_1)}^2.
    \end{equation*}
    The same kind of estimate, where in addition we need the control on the mean curvature in \eqref{StimeL^inf}, gives us the following
    \begin{equation*}
        \abs{I_7(t)-I_7(0)}\leq \omega(\|\varphi\|_{C^{2,\gamma}})\|V\cdot\nu_0\|_{H^{1/2}(\partial B_1)}^2.
    \end{equation*}
    Now we want to estimate $|I_5(t)|$, $|I_6(t)|$, where we remark that $I_5(0)=I_6(0)=0$ since $V^\tau|_{\partial B_1}=0$. Again, from Lemma \ref{lemmaA.1}, we get
    \begin{equation}\label{StimaVTan}
        \abs{V^\tau\circ\Phi_t}\leq\omega(\|\varphi\|_{C^{2,\gamma}})\abs{V\cdot\nu_0}.
    \end{equation}
    Then, by using the \emph{Cauchy-Schwarz} inequality, combining \eqref{StimeL^inf}-\eqref{StimaVTan}, we find
    \begin{equation*}
        \abs{I_5(t)}\leq\int_{\partial B_1}\abs{\nabla p}\abs{D^2u}\abs{(V\cdot\nu_t)\circ\Phi_t}\abs{V^\tau\circ\Phi_t}J^{\partial B_1}\Phi_t\,d\mathcal{H}^{n-1}\leq \omega(\|\varphi\|_{C^{2,\gamma}})\|V\cdot\nu_0\|_{H^{1/2}(\partial B_1)}^2.
    \end{equation*}
    The same computations give
    \begin{equation*}
        \abs{I_6(t)}\leq \omega(\|\varphi\|_{C^{2,\gamma}})\|V\cdot\nu_0\|_{H^{1/2}(\partial B_1)}^2.
    \end{equation*}
    Now we want to estimate the volume integrals. Lets start defining $v_t=\dot{u}_t\circ\Phi_t$. Then
    \begin{equation*}
        \nabla v_t=(\nabla\Phi_t)^T\nabla\dot{u}_t\circ\Phi_t,
    \end{equation*}
    where with the superscript $^T$ we denote the transposition of a matrix. From \eqref{Problema(u.)} we have that $v_t$ solves the following linear elliptic problem
    \begin{equation}\label{Problema(v)}
        \begin{cases}
            \mathrm{div}(M_t\nabla v_t)=0 & \text{ in } B_1\\
            v_t=-(\nabla u_t\cdot V)\circ\Phi_t & \text{ on } \partial B_1,
        \end{cases}
    \end{equation}
    where 
    \begin{equation*}
        M_t=\det \nabla\Phi_t(\nabla\Phi_t)^{-T}(\nabla\Phi_t)^{-1}
    \end{equation*}
    is symmetric and positive definite and we have used the notation $A^{-T}=(A^{-1})^T$. Since we are dealing with the same problem as \cite{BDPV}, we report without the proof the following estimate
    \begin{equation}\label{StimaGradv}
        \|\nabla v_t-\nabla\dot{u}_0\|_{L^2(B_1)}\leq\omega(\|\varphi\|_{C^{2,\gamma}})\|V\cdot\nu_0\|_{H^{1/2}(\partial B_1)},
    \end{equation}
    that is the content of \cite[Formula A.24]{BDPV}, in order to simplify the proof.
    
    We are now ready to estimate the volume integral $I_2$ as follows
    \begin{equation*}
        \begin{split}
            \abs{I_2(t)-I_2(0)}&\leq \abs{\int_{B_1}[\dot{u}_t^2j''(u_t)\circ\Phi_t-\dot{u}_0^2j''(u_0)]\,dx}\\
            &+\|\det\nabla\Phi_t-1\|_{L^\infty(B_1)}\abs{\int_{B_1}[\dot{u}_t^2j''(u_t)\circ\Phi_t-\dot{u}_0^2j''(u_0)]\,dx}\\
            &+\|\det\nabla\Phi_t-1\|_{L^\infty(B_1)}\abs{\int_{B_1}\dot{u}_0^2j''(u_0)\,dx}.
        \end{split}
    \end{equation*}
    Then, using the \emph{Poincaré} inequality for zero-mean functions, we find
    \begin{equation*}
        \int_{B_1}\dot{u}_0^2j''(u_0)\,dx\leq\frac{\|j''\|_{L^\infty(B_1)}}{\mu_1(B_1)^2}\|\nabla \dot{u}_0\|_{L^2(B_1)}^2\leq \frac{\|j''\|_{L^\infty(B_1)}}{\mu_1(B_1)^2}\|V\cdot\nu_0\|_{H^{1/2}(\partial B_1)}^2,
    \end{equation*}
    where $\mu_1(\cdot)$ is the first non-trivial \emph{Neumann eigenvalue}. Lastly, we have
    \begin{equation*}
        \begin{split}
            \abs{\int_{B_1}[\dot{u}_t^2j''(u_t)\circ\Phi_t-\dot{u}_0^2j''(u_0)]\,dx}\leq& \|j''\|_{L^\infty(B_1)}\int_{B_1}\abs{v_t^2-\dot{u}_0^2}\,dx\\
            +&\|j''(u_t)\circ\Phi_t-j''(u_0)\|_{L^\infty(B_1)}\int_{B_1}\abs{\dot{u}_0}^2\,dx.
        \end{split}
    \end{equation*}
    The second integral can be estimated by the Poincaré inequality and \eqref{StimeL^inf}. Then, to conclude the estimate $$|I_2(t)-I_2(0)|\leq\omega(\|\varphi\|_{C^{2,\gamma}})\|V\cdot\nu_0\|_{H^{1/2}(\partial B_1)}^2,$$ we remark that from \eqref{StimaGradv}, and again by the Poincaré inequality, follow that
    \begin{equation*}
        \int_{B_1}\abs{v_t^2-\dot{u}_0^2}\,dx\leq\omega(\|\varphi\|_{C^{2,\gamma}})\|V\cdot\nu_0\|_{H^{1/2}(\partial B_1)}^2.
    \end{equation*}
    Our last step is the estimate of $I_1$. As before, we introduce $w_t=\dot{p}_t\circ\Phi_t$, that, from \eqref{Problema(p.)}, solves the following linear elliptic problem
    \begin{equation}\label{Problema(w)}
        \begin{cases}
            \mathrm{div}(M_t\nabla w_t)=j''(u_t)\dot{u}_t\circ\Phi_t\,\det\nabla\Phi_t & \text{ in } B_1\\
            w_t=-(\nabla p_t\cdot V)\circ\Phi_t & \text{ on } \partial B_1.
        \end{cases}
    \end{equation}
    Then, from \eqref{Problema(v)}-\eqref{Problema(w)}, and by triangular inequality, we find
    \begin{equation*}
        \begin{split}
            |I_1(t)-I_1(0)|&\leq2\abs{\int_{B_1}[\nabla v_t\cdot\nabla w_t-\nabla\dot{u}_0\cdot\nabla\dot{p}_0]\,dx}\\
            &+2\abs{\int_{B_1}\left[\left((\nabla\Phi_t)^{-T}\nabla v_t \right)\cdot\left((\nabla\Phi_t)^{-T}\nabla w_t \right)\det\nabla\Phi_t-\nabla v_t\cdot\nabla w_t \right]\,dx}\\
            &=2\abs{\int_{B_1}[\nabla v_t\cdot\nabla w_t-\nabla\dot{u}_0\cdot\nabla\dot{p}_0]\,dx}\\
            &+2\abs{\int_{B_1}\left[\det\nabla\Phi_t(\nabla\Phi_t)^{-1}(\nabla\Phi_t)^{-T}-\mathbb{I}_n \right]\nabla v_t\cdot\nabla w_t\,dx},
        \end{split}
    \end{equation*}
    where in the last equality we have used the following algebraic identities
    \begin{equation*}
        Ax\cdot Ay-x\cdot y=x^TA^TAy-x^Ty=x^T(A^TA-\mathbb{I}_n)y=[(A^TA-\mathbb{I}_n)^Tx]\cdot y=(A^TA-\mathbb{I}_n)x\cdot y.
    \end{equation*}
    Regarding the second term in the previous estimate, we get
    \begin{equation*}
        \begin{split}
            \abs{\int_{B_1}\left[\det\nabla\Phi_t(\nabla\Phi_t)^{-1}(\nabla\Phi_t)^{-T}-\mathbb{I}_n \right]\nabla v_t\cdot\nabla w_t\,dx}&\leq\|\det\nabla\Phi_t(\nabla\Phi_t)^{-1}(\nabla\Phi_t)^{-T}-\mathbb{I}_n\|_{L^\infty(B_1)}\\
            &\times\left[\|\nabla v_t-\nabla\dot{u}_0\|_{L^2(B_1)}+\|\nabla \dot{u}_0\|_{L^2(B_1)} \right]\\
            &\times\left[\|\nabla w_t-\nabla\dot{p}_0\|_{L^2(B_1)}+\|\nabla \dot{p}_0\|_{L^2(B_1)} \right],
        \end{split}
    \end{equation*}
    where we have to estimate just the last row.
    Defining $h=\dot{p}_0-H(\nabla p_0\cdot V)$, we have that $h$ solves
    \begin{equation*}
        \begin{cases}
            -\Delta h=j''(u_0)\dot{u}_0 & \text{ in } B_1\\
            h=0 & \text{ on } \partial B_1.
        \end{cases}
    \end{equation*}
    We remark that, using the divergence Theorem, H\"older inequality and the Poincaré inequality for zero-trace functions, we get
    \begin{equation*}
        \|\nabla h\|_{L^2(B_1)}^2=\int_{B_1}hj''(u_0)\dot{u}_0\,dx\leq\|h\|_{L^2(B_1)}\|j''(u_0)\dot{u}_0\|_{L^2(B_1)}\leq\lambda(B_1)\|\nabla h\|_{L^2(B_1)}\|j''(u_0)\dot{u}_0\|_{L^2(B_1)}
    \end{equation*}
    Therefore, the previous estimate and the Poincaré inequality for zero-mean value functions yields 
    \begin{equation*}
        \|\nabla h\|_{L^2(B_1)}\leq \lambda_1(B_1)\|j''(u_0)\dot{u}_0\|_{L^2(B_1)}\leq \frac{\lambda_1(B_1)}{\mu_1(B_1)}\|j''\|_{L^\infty(B_1)}\|\nabla \dot{u}_0\|_{L^2(B_1)},
    \end{equation*}
    where $\lambda_1(\cdot)$ is the first Dirichlet eigenvalue of the Laplacian. From the latter, using the reverse triangle inequality and absorbing all constants into the modulus of continuity, we obtain
    \begin{equation*}
        \|\nabla\dot{p}_0\|_{L^2(B_1)}\leq\omega(\|\varphi\|_{C^{2,\gamma}})\|V\cdot\nu_0\|_{H^{1/2}(\partial B_1)}.
    \end{equation*}
    Then, the very last step to conclude the proof is the following estimate
    \begin{equation}\label{Stimaw}
        \|\nabla w_t-\nabla \dot{p}_0\|_{L^2(B_1)}\leq \omega(\|\varphi\|_{C^{2,\gamma}})\|V\cdot\nu_0\|_{H^{1/2}(\partial B_1)}.
    \end{equation}
    From classical elliptic estimates we find
    \begin{equation}\label{3}
        \begin{split}
            \|\nabla w_t-\nabla \dot{p}_0\|_{L^2(B_1)}\leq C(n)\{&\|(M_t-\mathbb{I}_n)\nabla w_t\|_{L^{2}(B_1)}+\|j''(u_t)\dot{u}_t\circ\Phi_t-j''(u_0)\dot{u}_0\|_{L^2(B_1)}\\
            &+\|(\nabla p_t\cdot V)\circ\Phi_t-\nabla p_0\cdot V\|_{H^{1/2}(\partial B_1)}\}.
        \end{split}
    \end{equation}
    Then, recalling that
    \begin{equation*}
        \|(\nabla p_t\cdot V)\circ\Phi_t-\nabla p_0\cdot V\|_{H^{1/2}(\partial B_1)}\leq \omega(\|\varphi\|_{C^{2,\gamma}})\|V\cdot\nu_0\|_{H^{1/2}(\partial B_1)},
    \end{equation*}
    since all the terms that appear on the right hand side in \eqref{3} have already been estimated, we find \eqref{Stimaw}.
\end{proof}
\subsection{Coercivity}
Let us now define the bilinear form induced by the second order shape derivative:
\begin{equation}\label{FormaBilineare}
    \begin{split}
    \partial^2 \mathcal{J}(B_1)[\xi,\xi]=&2\abs{\nabla u_0}\abs{\nabla p_0}\left\{\int_{B_1}\abs{\nabla H(\xi)}^2\,dx+\left[n-1-\frac{1}{2\abs{\nabla u_0}}-\frac{j'(0)}{2\abs{\nabla p_0}} \right]\int_{\partial B_1}\xi^2\,d\mathcal{H}^{n-1} \right\}\\
     &-\abs{\nabla u_0}^2\int_{B_1}j''(u_0)H(\xi)^2\,dx.
    \end{split}
\end{equation}
\begin{oss}\label{ossBilineare}
    It holds 
    \begin{equation*}
        \partial^2 \mathcal{J}(B_1)[V\cdot\nu_0,V\cdot\nu_0]=-f''(0)>0,
    \end{equation*}
    since $B_1$ is a maximum for $\mathcal{J}(\cdot)$.
\end{oss}

In the next result we prove the continuity for $\partial^2\mathcal{J}(B_1)[\cdot,\cdot]$, which will be useful in the proof of the coercivity.

\begin{lemma}[Continuity of $\partial^2\mathcal{J}(B_1)$]\label{continuitàforma}

There exist a constant $c_1(n,j)$ such that
\begin{equation*}
    \partial^2\mathcal{J}(B_1)[\xi_1,\xi_2]\leq c_1(n,j)\|\xi_1\|_{H^{1/2}(\partial B_1)}\|\xi_2\|_{H^{1/2}(\partial B_1)}.
\end{equation*}
\end{lemma}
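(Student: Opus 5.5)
We first polarize \eqref{FormaBilineare}, so that for $\xi_1,\xi_2\in H^{1/2}(\partial B_1)$
\begin{equation*}
\begin{split}
\partial^2\mathcal{J}(B_1)[\xi_1,\xi_2]=&2\abs{\nabla u_0}\abs{\nabla p_0}\left\{\int_{B_1}\nabla H(\xi_1)\cdot\nabla H(\xi_2)\,dx+\kappa\int_{\partial B_1}\xi_1\xi_2\,\dH\right\}\\
&-\abs{\nabla u_0}^2\int_{B_1}j''(u_0)H(\xi_1)H(\xi_2)\,dx,
\end{split}
\end{equation*}
where $\kappa=n-1-\frac{1}{2\abs{\nabla u_0}}-\frac{j'(0)}{2\abs{\nabla p_0}}$. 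This is the natural symmetric bilinear form, by linearity of $H$. Every coefficient depends only on $n$ and $j$. On $B_1$ we have $u_0=(1-|x|^2)/(2n)$, so $\abs{\nabla u_0}=1/n$ on $\partial B_1$. The function $p_0$ is the radial solution of $-\Delta p_0=j'(u_0)$ with zero boundary datum. Since $0\le u_0\le \frac1{2n}\le\delta$, the assumption on $j'$ gives $j'(u_0)\ge0$, and the divergence theorem yields $\abs{\nabla p_0}=\frac{1}{n\omega_n}\int_{B_1}j'(u_0)\,dx>0$. This is a finite constant depending on $n,j$, which also makes $\kappa$ well defined. We set $A:=2\abs{\nabla u_0}\abs{\nabla p_0}$ and $K:=\abs{\nabla u_0}^2\|j''\|_{L^\infty}$.

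The proof then estimates each of the three terms by Cauchy--Schwarz. For the Dirichlet term,
\[\Big|\int_{B_1}\nabla H(\xi_1)\cdot\nabla H(\xi_2)\Big|\le\|\nabla H(\xi_1)\|_{L^2}\|\nabla H(\xi_2)\|_{L^2}\le\|\xi_1\|_{H^{1/2}}\|\xi_2\|_{H^{1/2}},\]
directly from the definition of the $H^{1/2}$ norm. The boundary term is bounded the same way, by $\|\xi_1\|_{L^2(\partial B_1)}\|\xi_2\|_{L^2(\partial B_1)}\le\|\xi_1\|_{H^{1/2}}\|\xi_2\|_{H^{1/2}}$.

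The only term needing care is the volume term with $j''$. Boundedness of $j''$ gives
\[\Big|\int_{B_1}j''(u_0)H(\xi_1)H(\xi_2)\Big|\le\|j''\|_{L^\infty}\|H(\xi_1)\|_{L^2(B_1)}\|H(\xi_2)\|_{L^2(B_1)}.\]
We then need $\|H(\xi)\|_{L^2(B_1)}\le C(n)\|\xi\|_{H^{1/2}}$ for general $\xi$, not only for $\xi$ orthogonal to low-degree harmonics. The spherical harmonics computation in Section~\ref{Notations} extends verbatim to all degrees $k\ge0$: $\int_{B_1}H(\xi)^2=\sum_{k\ge0}c_k^2/(2k+n)\le\frac1n\int_{\partial B_1}\xi^2$. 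Alternatively, one can use a trace-Poincaré inequality $\|w\|_{L^2(B_1)}^2\le C(n)(\|w\|_{L^2(\partial B_1)}^2+\|\nabla w\|_{L^2(B_1)}^2)$. Either route bounds this term by $\frac{K}{n}\|\xi_1\|_{H^{1/2}}\|\xi_2\|_{H^{1/2}}$.

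Summing the three bounds gives the claim with $c_1(n,j)=A(1+|\kappa|)+K/n$. The main obstacle is mild: one must check that the constants $\abs{\nabla p_0}$ and $\kappa$ are finite and depend only on $(n,j)$. That follows from the explicit radial formula above, the positivity of $j'$ on $(0,\delta)$, and continuity of $j'$.
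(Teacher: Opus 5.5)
Your proposal is correct and follows essentially the same route as the paper: polarize the form, bound the Dirichlet, boundary and $j''$ terms separately by Cauchy--Schwarz/H\"older, and control the last one through $\|H(\xi)\|_{L^2(B_1)}^2\le\frac1n\int_{\partial B_1}\xi^2\,d\mathcal{H}^{n-1}$ from the spherical-harmonic expansion. Your observation that this bound holds for all degrees $k\ge0$, and not only for $\xi\in\mathcal{M}_0$, makes explicit a point the paper uses tacitly, since the lemma is applied to general $\xi_1,\xi_2$.
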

\begin{proof}
    By a triangle inequality we get
    \begin{equation*}
        \begin{split}
            \abs{\partial^2\mathcal{J}(B_1)[\xi_1,\xi_2]}\leq& \frac{2}{n}\abs{\nabla p_0}\int_{B_1}\abs{\nabla H(\xi_1) \cdot \nabla H(\xi_2)}\,dx\\
            +&\frac{2}{n}\abs{\nabla p_0}\abs{\frac{n}{2}-1-\frac{j'(0)}{2\abs{\nabla p_0}}}\int_{\partial B_1}\xi_1\xi_2\,d\mathcal{H}^{n-1}\\
            +&\frac{1}{n^2}\abs{\int_{B_1}j''(u_0)H(\xi_1)H(\xi_2)\,dx}.
        \end{split}
    \end{equation*}
    On the first two term the continuity follows just after a H\"older inequality and the definition of $\|\cdot\|_{H^{1/2}(\partial B_1)}$. Let us now focus on the last term, by boundedness of $j''$ and H\"older inequality we find
    \begin{equation*}
        \abs{\int_{B_1}j''(u_0)H(\xi_1)H(\xi_2)\,dx}\leq\norma{j''}_{L^\infty(B_1)}\norma{H(\xi_1)}_2\norma{H(\xi_2)}_2\leq\frac{\norma{j''}_{L^\infty(B_1)}}{n}\|\xi_1\|_{H^{1/2}(\partial B_1)}\|\xi_2\|_{H^{1/2}(\partial B_1)}.
    \end{equation*}
\end{proof}
\begin{lemma}[Coercivity]\label{Coercività2}
    Let $\partial^2 \mathcal{J}(B_1)$ be the symmetric and continuous bilinear form on $H^{1/2}(\partial B_1)$ defined in \eqref{FormaBilineare}. Then there exists $\delta>0$ depending on $j,n$ and a constant $c_2(n,j,\delta)$ such that 
    \begin{equation*}
        \partial^2\mathcal{J}(B_1)[\xi,\xi]\geq c_2(n,j,\delta)\norma{\xi}_{H^{1/2}(\partial B_1)}^2\quad\text{for every }\xi\in \mathcal{M}_\delta,
    \end{equation*}
    where
        \begin{equation*}
        \mathcal{M}_\delta=\left\{\xi\in H^{1/2}(\partial B_1)\,:\, \abs{\int_{\partial B_1}\xi\,d\mathcal{H}^{n-1}}+\abs{\int_{\partial B_1}x\xi\,d\mathcal{H}^{n-1}}\leq\delta\norma{\xi}_{H^{1/2}(\partial B_1)}\right\}.
    \end{equation*}
\end{lemma}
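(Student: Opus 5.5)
The plan is to diagonalize $\partial^2\mathcal{J}(B_1)$ in spherical harmonics and read off coercivity from the eigenvalues. Everything at $t=0$ is radial: $u_0=(1-|x|^2)/(2n)$, so $\abs{\nabla u_0}=1/n$, and $p_0$ is radial too. Write $\xi=\sum_{k\ge0}c_kY_k$, with $Y_k$ a normalized spherical harmonic of degree $k$ (sum over an orthonormal basis within each degree), so that $H(\xi)=\sum_k c_k|x|^kY_k(x/|x|)$. Since $j''(u_0)$ is radial, the mixed terms in $\int_{B_1}j''(u_0)H(\xi)^2\,dx$ vanish by orthogonality on spheres, exactly as in the computations of Section~\ref{Notations}. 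Hence
\[
\partial^2\mathcal{J}(B_1)[\xi,\xi]=\sum_{k\ge0}a_kc_k^2,
\]
\[
a_k=\frac{2}{n}\abs{\nabla p_0}\Bigl(k+n-1-\frac n2-\frac{j'(0)}{2\abs{\nabla p_0}}\Bigr)-\frac1{n^2}\int_0^1 j''(u_0(r))\,r^{2k+n-1}\,dr .
\]

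The key step is an explicit formula for $\abs{\nabla p_0}$ in terms of $j'(0)$ and $j''$. By the divergence theorem, $\abs{\nabla p_0}=\frac{1}{n\omega_n}\int_{B_1}j'(u_0)\,dx=\int_0^1 j'(u_0(r))r^{n-1}\,dr$. Since $u_0'(s)=-s/n$, we have $j'(u_0(r))=j'(0)+\frac1n\int_r^1 j''(u_0(s))\,s\,ds$. Fubini then gives
\[
\abs{\nabla p_0}=\frac{j'(0)}{n}+\frac1{n^2}\int_0^1 j''(u_0(s))\,s^{n+1}\,ds .
\]
Substituting into $a_k$, the $j'(0)$ terms combine to $\frac{2(k-1)}{n^2}j'(0)$, and the $j''$ terms give
\[
\frac1{n^2}\Bigl[\frac{2k+n-2}{n}\int_0^1 j''(u_0)s^{n+1}\,ds-\int_0^1 j''(u_0)s^{2k+n-1}\,ds\Bigr].
\]
For $k\ge1$ we have $s^{2k+n-1}\le s^{n+1}$ and $j''\ge0$ by convexity. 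Hence the bracket is at least $\frac{2k-2}{n}\int_0^1 j''(u_0)s^{n+1}\,ds$, and so
\[
a_k\ge \frac{2(k-1)}{n}\abs{\nabla p_0}\qquad(k\ge1).
\]
In particular $a_1=0$, which is the translation invariance, and $a_k\ge \frac{k}{n}\abs{\nabla p_0}$ for $k\ge2$. We also need $\abs{\nabla p_0}>0$. This follows because $0\le u_0\le \frac1{2n}\le\delta$, so $j'(u_0)>0$ on a set of positive measure, while $j'\ge0$ on $[0,\frac1{2n}]$ by convexity and continuity. Finally, $a_0$ is merely bounded: $|a_0|\le C(n,j)$, possibly with $a_0<0$.

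To conclude, split $\xi=\xi_0+\xi_1+\xi_\perp$ into degree $0$, degree $1$, and degree $\ge2$ components. For $\xi\in\mathcal M_\delta$ we have $|c_0|+|c_1|\le C(n)\delta\norma{\xi}_{H^{1/2}(\partial B_1)}$, because $\int\xi$ and $\int x\xi$ are, up to dimensional constants, exactly these coefficients. The degree-one part contributes nothing since $a_1=0$. Using the spectral description of the norm, $\norma{\xi_\perp}_{H^{1/2}}^2=\sum_{k\ge2}(1+k)c_k^2\le \frac32\sum_{k\ge2}kc_k^2$. Therefore
\[
\partial^2\mathcal{J}(B_1)[\xi,\xi]\ge \frac{2\abs{\nabla p_0}}{3n}\norma{\xi_\perp}^2_{H^{1/2}}-C\delta^2\norma{\xi}^2_{H^{1/2}} .
\]
Moreover $\norma{\xi}^2_{H^{1/2}}=\norma{\xi_\perp}^2_{H^{1/2}}+O(c_0^2+c_1^2)\le\norma{\xi_\perp}^2_{H^{1/2}}+C\delta^2\norma{\xi}^2_{H^{1/2}}$. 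Choosing $\delta=\delta(n,j)$ small then yields the claim with $c_2\simeq\abs{\nabla p_0}/n$.

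The main obstacle is the sign of the negative $j''$ term. A crude bound via $\norma{j''}_\infty$ would not suffice without a smallness assumption. The point to verify carefully is the integration by parts giving $\abs{\nabla p_0}$: it shows that the $j''$ contribution hidden inside $\abs{\nabla p_0}$ dominates the explicit negative $j''$ term degree by degree.
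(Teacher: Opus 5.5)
Your proof is correct. It works inside the same spherical-harmonic framework as the paper (your $a_k$ are exactly the paper's $A_k$), but it uses a genuinely different key estimate.

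The paper first works on $\mathcal{M}_0$, in three steps:
\begin{itemize}
\item It shows $A_k$ is increasing, because $I_k=\int_0^1 j''(u_0)r^{2k+n-1}\,dr$ is nonincreasing in $k$.
\item It integrates by parts in $I_2$ to get $A_2=\frac{n+2}{n}\cdot\frac{1}{n\omega_n}\int_{B_1}j'(u_0)(1-|x|^2)\,dx>0$.
\item It concludes that $A_k/k$ is a positive sequence converging to $2|\nabla p_0|/n$, hence bounded below by some non-explicit $\overline{C}>0$.
\end{itemize}
It then passes to $\mathcal{M}_\delta$ through the $L^2$-projection onto degrees $0$ and $1$ and the continuity lemma for $\partial^2\mathcal{J}(B_1)$. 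This costs an error of order $\delta\norma{\xi}^2_{H^{1/2}(\partial B_1)}$.

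You instead integrate by parts once, in $|\nabla p_0|$ itself. This rewrites $a_k$ as $\frac{2(k-1)}{n^2}j'(0)+\frac{1}{n^2}\bigl[\frac{2k+n-2}{n}\int_0^1 j''(u_0)s^{n+1}\,ds-I_k\bigr]$. From this identity you get $a_1=0$ exactly, and $a_k\geq\frac{2(k-1)}{n}|\nabla p_0|$ for every $k\geq1$.

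What your route buys:
\begin{itemize}
\item An explicit constant $c_2\simeq|\nabla p_0|/n$.
\item An exact identification of the translation kernel: the degree-one modes drop out rather than being estimated.
\item An error of order only $\delta^2$, coming solely from the degree-zero mode.
\item No need for the continuity lemma.
\end{itemize}
The same identity also gives $a_0\leq-\frac{2}{n}|\nabla p_0|<0$. So the volume constraint built into $\mathcal{M}_\delta$ is genuinely necessary, not an artifact of the method.

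What the paper's route buys: its perturbative step from $\mathcal{M}_0$ to $\mathcal{M}_\delta$ uses only boundedness of the form. It would therefore still work if the form were not exactly diagonal, or if $a_1$ were merely bounded rather than zero.
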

\begin{proof}
    Let us focus first on proving the estimate in the space $\mathcal{M}_0$ that is defined as
    \begin{equation*}
        \mathcal{M}_0=\left\{\xi\in H^{1/2}(\partial B_1)\,:\, \int_{\partial B_1}\xi\,d\mathcal{H}^{n-1}=\int_{\partial B_1}x_i\xi\,d\mathcal{H}^{n-1}=0,\, i=1,\dots,n \right\}.
    \end{equation*}
    First notice by Green's identity we have that:
    \begin{equation*}
        \int_{B_1}j'(u_0)\,dx=\int_{B_1}-\Delta p_0\,dx=\int_{\partial B_1}-\nd{p_0}\dH=\int_{\partial B_1}\abs{\nabla p_0}=\abs{\nabla p_0}n\omega_n,
    \end{equation*}
    therefore, to simplify the notation, let us set
    \begin{equation*}
    \abs{\nabla p_0}=\frac{\int_{B_1}j'(u_0)\,dx}{n\omega_n}=\gamma_j.
    \end{equation*}
    Moreover, by introducing the basis of spherical harmonics in $\mathcal{M}_0$, as defined before, then each term in \eqref{FormaBilineare} becomes
    \begin{equation*}
        \begin{split}
            \int_{\partial B_1}&\xi^2\dH=\sum_{k\geq2}c_k^2,\\
            \int_{B_1}&\abs{\nabla H(\xi)}^2\,dx=\sum_{k\geq2}kc_k^2,\\
            \int_{B_1}&j''(u_0)H(\xi)^2\,dx=\sum_{k\geq2}c_k^2
            \int_0^1 j''\left(\frac{1-r^2}{2n}\right)r^{2k+n-1}\,dr=\sum_{k\geq2}c_k^2I_k.
        \end{split}
    \end{equation*}
    Now, let us notice that if $r\in[0,1]$ we have that $r^{2(k+1)+n-1}\leq r^{2k+n-1}$ which implies that $I_k$ is non increasing and it holds
    \begin{equation*}
        I_k\leq\norma{j''}_{L^\infty}\int_0^1r^{2k+n-1}\,dr=\frac{\norma{j''}_{L^\infty}}{2k+n}\rightarrow0\quad\text{for }k\rightarrow\infty,
    \end{equation*}
    where the norm of $j''$ doesn't blow up due to the assumption.
    
    Hence, \eqref{FormaBilineare} becomes
    \begin{equation*}
        \partial^2 \mathcal{J}(B_1)[\xi,\xi]=\sum_{k\geq2}c_k^2\left[\frac{2\gamma_jk}{n}+\frac{\gamma_j(n-2)-j'(0)}{n}-\frac{I_k}{n^2}\right]=\sum_{k\geq2}c_k^2A_k,
    \end{equation*}
    thus the proof concludes if we find a positive constant $C$ such that $A_k\geq Ck$ for every $k\geq2$. Now, we have that 
    \begin{equation*}
        A_{k+1}-A_k=\frac{2\gamma_j}{n}-\frac{I_{k+1}-I_k}{n^2}>\frac{2\gamma_j}{n}>0
    \end{equation*}
    which means that $A_k$ is increasing and $A_k>A_2$. Moreover, $A_2$ can be rewritten using an integration by parts as
    \begin{equation*}
        A_2=\frac{n+2}{n}\left[\gamma_j-\int_0^1j'\left(\frac{1-r^2}{2n}\right)r^{n+1}\,dr\right].
    \end{equation*}
    Also, using coarea formula we have
    \begin{equation*}
        \frac{1}{n\omega_n}\int_{B_1}j'(u_0)\abs{x}^2\,dx=\frac{1}{n\omega_n}\int_0^1j'\left(\frac{1-r^2}{2n}\right)r^2\int_{\partial B_r}\,\dH\,dr=\int_0^1j'\left(\frac{1-r^2}{2n}\right)r^{n+1}\,dr,
    \end{equation*}
    hence, using definition of $\gamma_j$, the bracket in $A_2$ is given by
    \begin{equation*}
        \frac{1}{n\omega_n}\int_{B_1} j'(u_0)-j'(u_0)\abs{x}^2\,dx=\frac{1}{n\omega_n}\int_{B_1} j'(u_0)(1-\abs{x}^2)\,dx,
    \end{equation*}
    thus $A_2$ is positive if $j'(u_0)$ is positive.

    Therefore, if we consider the sequence $\frac{A_k}{k}$, it is positive and converges to $\frac{2\gamma_j}{n}$, so it admits a positive minimum $\overline{C}(n,j)$. Thus, using the equivalence gives
    \begin{equation*}
        \partial^2 \mathcal{J}(B_1)[\xi,\xi]=\sum_{k\geq2}c_k^2A_k\geq \overline{C}\sum_{k\geq2}kc_k^2\geq\frac{\overline{C}}{2}\norma{\xi}^2_{H^{1/2}(\partial B_1)}.
    \end{equation*}
    For every $\xi\in\mathcal{M}_\delta$ let us consider its $L^2$ projection on $\mathcal{M}_0^\perp$, given by
    \begin{equation*}
        \Pi(\xi)=a_0W_0+\sum_{i=1}^na_{1,i}W_{1,i},
    \end{equation*}
    where
    \begin{equation*}
        W_0=\sqrt{\frac{1}{nw_n}}\qquad W_{1,i}(x)=\sqrt{\frac{1}{w_n}}x_i,\quad x\in\partial B_1,\quad i=1,\dots,n
    \end{equation*}
    and
    \begin{equation*}
        a_0=\int_{\partial B_1}\xi W_0\dH\qquad a_{1,i}=\int_{\partial B_1}\xi W_{1,i}\dH,\quad i=1,\dots,n
    \end{equation*}
    then $\xi-\Pi(\xi)\in\mathcal{M}_0$. Moreover, by Green formula and definition of $\mathcal{M}_\delta$
    \begin{equation}\label{diffxiproiezione}
        \norma{\xi-\Pi(\xi)}^2_{H^{1/2}(\partial B_1)}=\norma{\xi}^2_{H^{1/2}(\partial B_1)}-\norma{\Pi(\xi)}^2_{H^{1/2}(\partial B_1)}\geq\norma{\xi}^2_{H^{1/2}(\partial B_1)}-C\delta^2\norma{\xi}^2_{H^{1/2}(\partial B_1)}.
    \end{equation}
    Hence using bilinearity, the identity \eqref{diffxiproiezione}, the previous computation in $\mathcal{M}_0$ and the continuity \ref{continuitàforma} gives
    \begin{equation*}
        \begin{split}
            \partial^2\mathcal{J}(B_1)[\xi,\xi]&=\partial^2\mathcal{J}(B_1)[\xi-\Pi(\xi),\xi-\Pi(\xi)]+2\partial^2\mathcal{J}(B_1)[\xi,\Pi(\xi)]-\partial^2\mathcal{J}(B_1)[\Pi(\xi),\Pi(\xi)]\geq\\
            &\geq \frac{\overline{C}}{2}\norma{\xi-\Pi(\xi)}^2_{H^{1/2}(\partial B_1)}-2C\norma{\xi}_{H^{1/2}(\partial B_1)}\norma{\Pi(\xi)}_{H^{1/2}(\partial B_1)}-C\norma{\Pi(\xi)}^2_{H^{1/2}(\partial B_1)}\geq\\
            &\geq\frac{\overline{C}}{2}\norma{\xi}^2_{H^{1/2}(\partial B_1)}-C\delta\norma{\xi}^2_{H^{1/2}(\partial B_1)}
        \end{split}
    \end{equation*}
    which gives the thesis for $\delta$ sufficiently small.
\end{proof}

We conclude by adding that, furthermore, in Appendix \ref{alternative} we provide a set of different assumptions regarding $j$ for which the result remains valid.
\subsection{Proof of the stability for nearly spherical set}
The last result that we need to conclude the local stability, is the following lemma due to Dambrine, see \cite[Theorem 1]{D}.
\begin{lemma}\label{lemma 3.4}
    Let $0<\gamma\leq1$, there exist a modulus of continuity $\omega$ and a constant $\delta_b=\delta_b(n,\gamma)$, such that, for every $C^{2,\gamma}$ nearly spherical set $\Omega$ parametrized by $\varphi$ with $\|\varphi\|_{C^{2,\gamma}}\leq\delta_b$ and $\abs{\Omega}=\abs{B_1}$, we have
    \begin{equation*}
        \mathcal{J}(\Omega)\leq\mathcal{J}(B_1)-\frac{1}{2}\partial^2\mathcal{J}(B_1)[\varphi,\varphi]+\omega(\|\varphi\|_{C^{2,\gamma}})\|\varphi\|_{H^{1/2}(\partial B_1)}^2.
    \end{equation*}
\end{lemma}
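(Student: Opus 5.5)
The plan is a second-order Taylor expansion of $f(t)=\mathcal{J}(\Phi_t(B_1))$ along the flow given by Lemma \ref{lemmaA.1}, followed by replacing $V\cdot\nu_0$ with $\varphi$.

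\textbf{Step 1: Taylor expansion.} Take $\|\varphi\|_{C^{2,\gamma}}\le\min\{\delta_e,\delta_a\}$ and let $V=V_\varphi$. Then $\Phi_1(B_1)=\Omega$ and $f(1)=\mathcal{J}(\Omega)$. Taylor's formula with integral remainder gives
\begin{equation*}
f(1)=f(0)+f'(0)+\int_0^1(1-t)f''(t)\,dt.
\end{equation*}
By the remark following the First Variation lemma, $f'(0)=0$. Adding and subtracting $f''(0)$ inside the integral and using the Improved Continuity lemma, we get
\begin{equation*}
\mathcal{J}(\Omega)\le\mathcal{J}(B_1)+\tfrac12 f''(0)+\tfrac12\omega_a(\|\varphi\|_{C^{2,\gamma}})\|V\cdot\nu_0\|^2_{H^{1/2}(\partial B_1)}.
\end{equation*}
Remark \ref{ossBilineare} identifies $f''(0)=-\partial^2\mathcal{J}(B_1)[V\cdot\nu_0,V\cdot\nu_0]$.

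\textbf{Step 2: replacing $V\cdot\nu_0$ by $\varphi$.} Set $\xi=V\cdot\nu_0$ and $\eta=\varphi-\xi$. Lemma \ref{lemmaA.1}(iii) gives
\begin{equation*}
\|\eta\|_{H^{1/2}(\partial B_1)}\le\omega_e(\|\varphi\|_{L^\infty})\|\varphi\|_{H^{1/2}(\partial B_1)},
\end{equation*}
and consequently $\|\xi\|_{H^{1/2}(\partial B_1)}\le 2\|\varphi\|_{H^{1/2}(\partial B_1)}$ for small $\varphi$. Expand by bilinearity and symmetry:
\begin{equation*}
\partial^2\mathcal{J}(B_1)[\xi,\xi]=\partial^2\mathcal{J}(B_1)[\varphi,\varphi]-2\,\partial^2\mathcal{J}(B_1)[\varphi,\eta]+\partial^2\mathcal{J}(B_1)[\eta,\eta].
\end{equation*}
Lemma \ref{continuitàforma} bounds the last two terms by $c_1(2\omega_e+\omega_e^2)\|\varphi\|^2_{H^{1/2}}$. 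All error terms can therefore be merged into one modulus $\omega$, and $\delta_b$ is taken as the minimum of the thresholds used. This is exactly the claimed inequality.

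\textbf{Main obstacle.} The delicate point is that the improved continuity estimate is stated in terms of $\|V\cdot\nu_0\|_{H^{1/2}}$, not $\|\varphi\|_{H^{1/2}}$, and the error must be quadratic in the $H^{1/2}$ norm, not in the $C^{2,\gamma}$ norm. Lemma \ref{lemmaA.1}(iii) is precisely what bridges this gap. One must check that it yields the comparability $\|V\cdot\nu_0\|_{H^{1/2}}\simeq\|\varphi\|_{H^{1/2}}$ with constants close to $1$.

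A second point to verify is that Lemma \ref{continuitàforma} applies with the absolute value. Its proof indeed bounds $|\partial^2\mathcal{J}(B_1)[\xi_1,\xi_2]|$, so the cross terms can be controlled regardless of sign. Apart from these two points the argument is routine bookkeeping of moduli of continuity.
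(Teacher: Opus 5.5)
Your proposal is correct and follows essentially the same route as the paper's proof. Both arguments use the Taylor expansion of $f$ with integral remainder, $f'(0)=0$, and the identification $f''(0)=-\partial^2\mathcal{J}(B_1)[V\cdot\nu_0,V\cdot\nu_0]$ from Remark \ref{ossBilineare}, and then replace $V\cdot\nu_0$ by $\varphi$ via bilinearity, the continuity of $\partial^2\mathcal{J}(B_1)$ and Lemma \ref{lemmaA.1}(iii). You are slightly more explicit than the paper: you invoke the Improved Continuity lemma for the remainder $\int_0^1(1-s)(f''(s)-f''(0))\,ds$, and you use $\|V\cdot\nu_0\|_{H^{1/2}(\partial B_1)}\le 2\|\varphi\|_{H^{1/2}(\partial B_1)}$ to express that remainder in terms of $\varphi$. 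The paper leaves both of these implicit.
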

\begin{proof}
    By Taylor formula we have 
    \begin{equation*}
        \mathcal{J}(\Omega)=\mathcal{J}(B_1)+f'(0)+\frac{1}{2}f''(0)+\int_0^1(1-s)(f''(s)-f''(0))\,ds.
    \end{equation*}
    Then from Talenti we know that $f'(0)=0$ and from Remark \ref{ossBilineare} we have $f''(0)=-\partial^2\mathcal{J}(B_1)[V\cdot\nu_0,V\cdot\nu_0]$. Then the previous identity becomes
    \begin{equation}\label{1}
        \mathcal{J}(\Omega)=\mathcal{J}(B_1)-\frac{1}{2}\partial^2\mathcal{J}(B_1)[V\cdot\nu_0,V\cdot\nu_0]+\int_0^1(1-s)(f''(s)-f''(0))\,ds.
    \end{equation}
    Since $\partial^2\mathcal{J}(B_1)[\cdot,\cdot]$ is symmetric and continuous, from Lemma \ref{lemmaA.1}, we find
    \begin{equation}\label{2}
        \abs{\partial^2\mathcal{J}(B_1)[V\cdot\nu_0,V\cdot\nu_0]-\partial^2\mathcal{J}(B_1)[\varphi,\varphi]}\leq\omega(\|\varphi\|_{L^\infty(\partial B_1)})\|\varphi\|_{H^{1/2}(\partial B_1)}^2.
    \end{equation}
    Then, combining \eqref{1}-\eqref{2}, we get the thesis.
\end{proof}
We now have all the ingredients to prove stability for nearly spherical sets.

\begin{proof}[Proof of Theorem \ref{MainTh}]
    By assumption on $\varphi$ we get
    \begin{equation*}
        \abs{\Omega}=\int_{\partial B_1}\frac{(1+\varphi)^n}{n}\,d\mathcal{H}^{n-1},\quad x_\Omega=\int_{\partial B_1}y\frac{(1+\varphi)^{n+1}}{n+1}\,d\mathcal{H}^{n-1}=0.
    \end{equation*}
    Thanks to the smallness assumption on $\varphi$ we have
    \begin{equation*}
        \abs{\int_{\partial B_1}\varphi\,d\mathcal{H}^{n-1}}=\abs{\sum_{k=2}^{n}\binom{n}{k}\int_{\partial B_1}\frac{\varphi^k}{n}\,d\mathcal{H}^{n-1}}\leq c\int_{\partial B_1}\varphi^2\,d\mathcal{H}^{n-1}\leq c\delta_c\|\varphi\|_{H^{1/2}(\partial B_1)},
    \end{equation*}
    and
    \begin{equation*}
        \abs{\int_{\partial B_1}y_i\varphi\,d\mathcal{H}^{n-1}}\leq\sum_{k=2}^{n+1}\binom{n+1}{k}\int_{\partial B_1}\abs{\frac{\varphi^k}{n+1}}\,d\mathcal{H}^{n-1}\leq c\delta_c\|\varphi\|_{H^{1/2}(\partial B_1)},
    \end{equation*}
    where $c=c(n)$ is a dimensional constant. In the same notation of Lemma \ref{Coercività2}, we have that $\varphi\in\mathcal{M}_{c\delta_c}$ and, by Lemmas \ref{Coercività2}-\ref{lemma 3.4}, if $\delta_c<<\min\{\delta_b, \delta_d\}$, we can infer:
    \begin{equation*}
        \begin{split}
            \mathcal{J}(B_1)-\mathcal{J}(\Omega)&\geq \frac{1}{2}\partial^2\mathcal{J}(B_1)[\varphi,\varphi]-\omega(\|\varphi\|_{C^{2,\gamma}})\|\varphi\|_{H^{1/2}(\partial B_1)}^2\\
            &\geq C(n,j)\|\varphi\|_{H^{1/2}(\partial B_1)}^2.
        \end{split}
    \end{equation*}
\end{proof}

\appendix
\section{Some functionals that satisfy the assumptions}\label{listafunzione}
In the following with $u_\Omega$ we will denote the solution to \eqref{Problema(u)} in $\Omega$, while with
\begin{equation*}
    u_{B_1}(x)=\frac{1-|x|^2}{2n},
\end{equation*}
the solution to \eqref{Problema(u)} in $B_1$.
\begin{itemize}
    \item The \emph{Moser-Trudinger functional} in dimension $n=2$:
    \[ j(s)=e^{s^2}-1.\]
    For this function, the stability result reads as follow
    \begin{equation*}
        \int_{B_1}e^{u_{B_1}(x)^2}\,dx-\int_\Omega e^{u_\Omega(x)^2}\,dx\geq c(n)\|\varphi\|_{H^{1/2}(\partial B_1)}^2.
    \end{equation*}
    We notice that for $n\geq3$, the function $j(s)=e^{s^\frac{n}{n-1}}$ no longer has a bounded second derivative.
    \item The \emph{$p$-th power of the Lebesgue norm} for $2\leq p<\infty$:
    \[j(s)=s^p.\]
    For this function, the stability result reads as follow
    \begin{equation*}
        \|u_{B_1}\|_{L^p(B_1)}^p-\|u_\Omega\|_{L^p(\Omega)}^p\geq c(n)\|\varphi\|_{H^{1/2}(\partial B_1)}^2
    \end{equation*}
    We notice that for $1<p<2$, the function $j(s)=s^p$ no longer has a bounded second derivative.
    \item The \emph{Torsional rigidity} (or $L^1$-norm):
    \[j(s)=s.\]
    In this case our result recovers, in the class of nearly spherical sets, the result proved in \cite{BDPV}, and it reads as follows
    \begin{equation*}
        T(B_1)-T(\Omega)\geq c(n)\|\varphi\|_{H^{1/2}(\partial B_1)}^2.
    \end{equation*}
\end{itemize}

\section{Alternative proof}\label{alternative}
We can also establish the coercivity of the functional $\partial^2\mathcal{J}$ under a structural assumption on $j$, different from the regularity assumption stated in the introduction. First, we provide a different proof of continuity using a weaker summability assumption
\begin{equation}\label{Integrabilita j''}
    j''\in L^{\frac n2}\!\left(0,\frac{1}{2n}\right),
\end{equation}

\begin{lemma}[Continuity of $\partial^2\mathcal{J}$]
There exists a constant $c_1=c_1(n,j)>0$ such that
\[
    \partial^2\mathcal{J}(B_1)[\xi_1,\xi_2]\leq c_1(n,j)\|\xi_1\|_{H^{1/2}(\partial B_81)}\|\xi_2\|_{H^{1/2}(\partial B_1)}
\]
for every $\xi_1,\xi_2\in H^{1/2}(\partial B_1)$.
\end{lemma}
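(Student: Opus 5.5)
The proof splits \eqref{FormaBilineare} into its three pieces, exactly as in Lemma \ref{continuitàforma}. The first two pieces are $2\abs{\nabla u_0}\abs{\nabla p_0}\int_{B_1}\nabla H(\xi_1)\cdot\nabla H(\xi_2)\,dx$ and the constant multiple of $\int_{\partial B_1}\xi_1\xi_2\dH$. These never involve $j''$, so Cauchy--Schwarz together with the definition of $\|\cdot\|_{H^{1/2}(\partial B_1)}$ bounds them by a constant times $\|\xi_1\|_{H^{1/2}}\|\xi_2\|_{H^{1/2}}$. The constant depends on $n$, $j'(0)$ and $\gamma_j=\abs{\nabla p_0}$; the latter is finite because $j'$ is bounded on $[0,\frac{1}{2n}]$, being monotone as $j$ is convex. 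The whole argument therefore reduces to the volume term $\int_{B_1}j''(u_0)H(\xi_1)H(\xi_2)\,dx$, where $L^\infty$ boundedness of $j''$ must be replaced by \eqref{Integrabilita j''}.

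For that term we use Hölder with exponents $\frac n2$ and $\frac{2^*}{2}\cdot$, i.e.
\[
\abs{\int_{B_1}j''(u_0)H(\xi_1)H(\xi_2)\,dx}\leq\norma{j''(u_0)}_{L^{n/2}(B_1)}\norma{H(\xi_1)}_{L^{2^*}(B_1)}\norma{H(\xi_2)}_{L^{2^*}(B_1)},
\]
with $2^*=\frac{2n}{n-2}$, since $\frac2n+\frac{n-2}{2n}+\frac{n-2}{2n}=1$. The Sobolev embedding $W^{1,2}(B_1)\hookrightarrow L^{2^*}(B_1)$ then gives $\norma{H(\xi_i)}_{L^{2^*}}\leq C(n)\norma{H(\xi_i)}_{W^{1,2}(B_1)}$. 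The full $W^{1,2}$ norm of the harmonic extension is controlled by $\|\xi_i\|_{H^{1/2}}$. This holds because $\int_{B_1}H(\xi)^2\,dx\le C(n)\big(\int_{\partial B_1}\xi^2\dH+\int_{B_1}\abs{\nabla H(\xi)}^2\,dx\big)$, by the trace Poincaré inequality recalled in Section \ref{Notations}; for $\xi$ orthogonal to low modes it is even the explicit bound $\le\frac1n\int_{\partial B_1}\xi^2$.

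The step that needs care is computing $\norma{j''(u_0)}_{L^{n/2}(B_1)}$ from the one-dimensional hypothesis \eqref{Integrabilita j''}. Since $u_0=\frac{1-\abs{x}^2}{2n}$ is radial, polar coordinates and the substitution $s=\frac{1-r^2}{2n}$, for which $ds=-\frac{r}{n}\,dr$, give
\[
\int_{B_1}\abs{j''(u_0)}^{n/2}dx=n\omega_n\int_0^1\abs{j''\!\left(\tfrac{1-r^2}{2n}\right)}^{n/2}r^{n-1}dr=n^2\omega_n\int_0^{1/(2n)}\abs{j''(s)}^{n/2}(1-2ns)^{\frac{n-2}{2}}ds.
\]
For $n\ge2$ the weight $(1-2ns)^{\frac{n-2}{2}}$ is at most $1$, so the right-hand side is bounded by $n^2\omega_n\norma{j''}^{n/2}_{L^{n/2}(0,1/(2n))}$, which is finite.

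The case $n=2$ is the main obstacle: there $2^*=\infty$ and $L^{n/2}=L^1$, so the Hölder pairing above breaks down. In that case I would try either to use the compact embedding into every $L^q$ with an Orlicz/Trudinger pairing, or to note that the change of variables gives $\int_{B_1}\abs{j''(u_0)}\,dx=4\pi\norma{j''}_{L^1}$, and then bound $H(\xi_i)$ in $L^\infty$ only away from the boundary. Failing that, I would treat $n=2$ by requiring slightly more integrability, $j''\in L^{1+\epsilon}$. Collecting the three bounds yields $c_1(n,j)$.
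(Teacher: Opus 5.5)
For $n\geq3$ your argument is the paper's: the same splitting, H\"older with exponents $\frac n2,2^*,2^*$, the Sobolev embedding, and control of $\norma{H(\xi_i)}_{W^{1,2}(B_1)}$ by $\norma{\xi_i}_{H^{1/2}(\partial B_1)}$. Your change of variables computing $\norma{j''(u_0)}_{L^{n/2}(B_1)}$ from the one-dimensional hypothesis is correct. It makes explicit a step the paper skips, since the paper just writes $\norma{j''}_{L^q(B_1)}$.

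A small correction: $\int_{B_1}H(\xi)^2\,dx\leq\frac1n\int_{\partial B_1}\xi^2\dH$ holds for \emph{every} $\xi$, because $\frac{1}{2k+n}\leq\frac1n$ for all $k\geq0$. The trace Poincar\'e--Wirtinger inequality from the preliminaries controls the boundary oscillation by the Dirichlet energy, not the interior $L^2$ norm, so it is not the right justification.

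The genuine gap is $n=2$, where the hypothesis reads $j''\in L^1(0,\frac14)$. You correctly note that the H\"older--Sobolev pairing breaks down there ($2^*=\infty$ and $W^{1,2}(B_1)\not\hookrightarrow L^\infty(B_1)$). But you leave this case unproved, and none of your fallbacks closes it under the stated hypothesis:
\begin{itemize}
\item Trudinger--Moser only gives $H(\xi)^2\in\exp L$, and the dual pairing then requires $j''(u_0)\in L\log L$, not merely $L^1$.
\item An $L^\infty$ bound on $H(\xi_i)$ away from $\partial B_1$ says nothing near the boundary. That is exactly where $u_0\to0$, so it is where an integrable singularity of $j''$ at $s=0$ sits.
\item Assuming $j''\in L^{1+\epsilon}$ changes the statement.
\end{itemize}
The paper's proof has the same defect, since $r=\frac{n}{n-2}$ is undefined for $n=2$.

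The missing idea is that the weight $j''(u_0)$ is \emph{radial}, so you can argue as in the paper's coercivity proof. Expand $\xi=\sum_{k\geq0}c_kY_k$ over all degrees, use $r^{2k}\leq1$, and apply your own substitution $s=\frac{1-r^2}{2n}$:
\[
\int_{B_1}\abs{j''(u_0)}H(\xi)^2\,dx=\sum_{k\geq0}c_k^2\int_0^1\abs{j''\left(\tfrac{1-r^2}{2n}\right)}r^{2k+n-1}\,dr\leq n\int_0^{\frac{1}{2n}}\abs{j''(s)}(1-2ns)^{\frac{n-2}{2}}\,ds\int_{\partial B_1}\xi^2\dH .
\]
Cauchy--Schwarz with the weight $\abs{j''(u_0)}$ then bounds $\abs{\int_{B_1}j''(u_0)H(\xi_1)H(\xi_2)\,dx}$ by the same constant times $\norma{\xi_1}_{L^2(\partial B_1)}\norma{\xi_2}_{L^2(\partial B_1)}$. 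This works for every $n\geq2$. It needs only $j''\in L^1(0,\frac{1}{2n})$, which the $L^{n/2}$ hypothesis implies on a bounded interval, and it avoids the Sobolev embedding entirely.
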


\begin{proof}
The proof is unchanged, except for the estimate of the last term arising in the triangle inequality, where we exploit the integrability assumption \eqref{Integrabilita j''}. Set
\[
q=\frac n2,\qquad r=\frac{n}{n-2}.
\]
Then $1/q+1/r=1$ and $2r=2^\ast$. By H\"older's inequality, we obtain
\[
\begin{aligned}
\left|\int_{B_1} j''(u_0)\,H(\xi_1)\,H(\xi_2)\,dx\right|&\leq \|j''\|_{L^q(B_1)}\|H(\xi_1)H(\xi_2)\|_{L^r(B_1)} \\&\leq \|j''\|_{L^q(B_1)}\|H(\xi_1)\|_{L^{2^*}(B_1)}\|H(\xi_2)\|_{L^{2^*}(B_1)} \\&\leq c(n)\|j''\|_{L^q(B_1)}\|H(\xi_1)\|_{H^1(B_1)}\|H(\xi_2)\|_{H^1(B_1)},
\end{aligned}
\]
where in the last step we used the Sobolev embedding $H^1(B_1)\hookrightarrow L^{2^*}(B_1)$.
\end{proof}

While, the proof of coercivity only differs in the treatment of the class $\mathcal{M}_0$, and yields an explicit coercivity constant assuming that $j''$ is bounded (at least in the interval $\left(0,\frac{1}{2n}\right)$) and satisfies:
\begin{equation}\label{BoundBassoj}
    \dfrac{j'(0)}{\int_0^1j'(s)(1-2ns)^{(n-2)/2}\,ds}\geq n(n-2),
\end{equation}
\begin{equation}\label{BoundAltoj}
    \frac{j'(0)\beta_{n/2,1}^4+4\|j''\|_{L^\infty(0,1/2n)}}{\beta_{n/2,1}^4\int_0^1j'(s)(1-2ns)^{(n-2)/2}\,ds}\leq n^2(n+2),
\end{equation}
here $\beta_{n/2,1}$ denotes the first positive zero of a Bessel function of the first kind.
\begin{lemma}[Coercivity on $\mathcal{M}_0$]
Let $\partial^2\mathcal{J}$ be the symmetric continuous bilinear form on $H^{1/2}(\partial B_1)$ defined in \eqref{FormaBilineare}. Then $\partial^2\mathcal{J}(B_1)$ is coercive on $\mathcal M_0\setminus\{0\}$, that is, there exists a constant $c_2=c_2(n,j)>0$ such that
\[
    \partial^2\mathcal{J}(B_1)[\xi,\xi]\geq c_2(n,j)\|\xi\|_{H^{1/2}(\partial B_1)}^2\qquad\text{for every }\xi\in\mathcal M_0.
\]
More precisely,
\[
    c_2(n,j)=\frac{|\nabla p_0|}{n}\left[\frac n4+\frac12-\frac{j'(0)}{4|\nabla p_0|}-\frac{\|j''\|_{L^\infty(B_1)}}{n|\nabla p_0|\mu_1(B_1)^2}\right],
\]
and $c_2(n,j)>0$ in view of \eqref{BoundAltoj}.
\end{lemma}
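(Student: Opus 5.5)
Let $\xi\in\mathcal M_0$, and write the three pieces of \eqref{FormaBilineare} separately, without expanding in spherical harmonics. Set $D:=\int_{B_1}\abs{\nabla H(\xi)}^2\,dx$ and $S:=\int_{\partial B_1}\xi^2\dH$. Recall that $\abs{\nabla u_0}=\frac1n$ and $\abs{\nabla p_0}=\gamma_j$. Then
\[
\partial^2\mathcal J(B_1)[\xi,\xi]=\frac{2\abs{\nabla p_0}}{n}\Big\{D+\Big[n-1-\frac n2-\frac{j'(0)}{2\abs{\nabla p_0}}\Big]S\Big\}-\frac1{n^2}\int_{B_1}j''(u_0)H(\xi)^2\,dx .
\]

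The first step is to bound the $j''$-term from above. Since $\xi\in\mathcal M_0$ has zero mean on $\partial B_1$, $H(\xi)$ has zero mean in $B_1$: it is orthogonal to constants in the spherical harmonic expansion. The Neumann Poincaré inequality, used exactly as in the Improved Continuity lemma, then gives
\[
\int_{B_1}j''(u_0)H(\xi)^2\le \frac{\|j''\|_{L^\infty}}{\mu_1(B_1)^2}\,D .
\]

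The second step handles the bracket multiplying $S$, which may be negative. On $\mathcal M_0$ only degrees $k\ge2$ occur, so $S=\sum c_k^2\le\frac12\sum kc_k^2=\frac12 D$. Whenever the coefficient of $S$ is negative, I trade $S$ for $\frac12 D$; otherwise I simply drop it. Either way the whole expression is bounded below by a multiple of $D$:
\[
\partial^2\mathcal J(B_1)[\xi,\xi]\ge \frac{2\abs{\nabla p_0}}{n}\Big[\frac n4+\frac12-\frac{j'(0)}{4\abs{\nabla p_0}}-\frac{\|j''\|_{L^\infty}}{2n\abs{\nabla p_0}\mu_1(B_1)^2}\Big]D .
\]
Here the coefficient comes from $1+\frac12\big(\frac n2-1-\frac{j'(0)}{2\abs{\nabla p_0}}\big)$. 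This matches the stated $c_2$ up to how the factor $2$ is distributed. Indeed, if $D$ is finally converted using $D\ge\frac12\|\xi\|_{H^{1/2}}^2$ (the trace Poincaré--Wirtinger equivalence recalled in Section~\ref{Notations}), one gets exactly $c_2\|\xi\|^2_{H^{1/2}}$ with the displayed constant.

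The remaining, and main, obstacle is checking that $c_2>0$ under \eqref{BoundAltoj}. For this I would rewrite $\abs{\nabla p_0}=\gamma_j=\frac1{n\omega_n}\int_{B_1}j'(u_0)\,dx$ in one variable. Changing variables $s=\frac{1-r^2}{2n}$ gives $r^{n-2}=(1-2ns)^{(n-2)/2}$ and $r\,dr=-n\,ds$, so $\gamma_j=n\int_0^{1/2n}j'(s)(1-2ns)^{(n-2)/2}ds$. This is the integral appearing in \eqref{BoundAltoj}, up to the normalisation of the interval, which I would reconcile with the paper's convention that $j'$ is considered on $(0,\frac1{2n})$. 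I would also need $\mu_1(B_1)$ in terms of Bessel zeros. The appearance of $\beta_{n/2,1}$ suggests using a bound like $\mu_1(B_1)^2\ge\beta_{n/2,1}^2/4$, or substituting an explicit eigenvalue, with $\mu_1$ read as a square root of the eigenvalue. Dividing the positivity condition by $\gamma_j$ and clearing denominators should then yield exactly \eqref{BoundAltoj}. The condition \eqref{BoundBassoj} would ensure the $S$-coefficient has the sign needed to choose between the two cases above. I expect the constant bookkeeping here, rather than any conceptual step, to be the delicate part.
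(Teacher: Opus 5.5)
Your derivation of the inequality follows the paper's proof step for step:
\begin{itemize}
\item the Neumann--Poincar\'e inequality for the $j''$-term;
\item $S\le\frac12 D$ on $\mathcal M_0$ (the paper states this as the minimum of $D/S$ being $2$);
\item \eqref{BoundBassoj} to fix the sign of the coefficient of $S$;
\item $D\ge\frac12\|\xi\|^2_{H^{1/2}(\partial B_1)}$ at the end.
\end{itemize}

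Two small corrections are needed. First, the displayed lower bound does not hold ``either way''. If $\frac n2-1-\frac{j'(0)}{2\abs{\nabla p_0}}$ were positive, dropping $S$ leaves the factor $1$, which is smaller than $\frac n4+\frac12-\frac{j'(0)}{4\abs{\nabla p_0}}$. You only need the nonpositive case, and your own change of variables shows that this case is exactly \eqref{BoundBassoj}. Indeed, with $K:=\int_0^{1/(2n)}j'(s)(1-2ns)^{(n-2)/2}\,ds$ you have $\abs{\nabla p_0}=nK$, so the coefficient is $\le 0$ if and only if $j'(0)\ge n(n-2)K$.

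Second, combining the two terms before converting $D$ gives $\frac{\|j''\|_{L^\infty}}{2n\abs{\nabla p_0}\mu_1(B_1)^2}$ inside the bracket. This yields a constant larger than the stated $c_2$, not equal to it. The paper's $c_2$ comes from converting the positive part with $D\ge\frac12\|\xi\|^2$ and the $j''$-part with $D\le\|\xi\|^2$ separately. Since your bracket dominates the paper's, the stated inequality still follows once $c_2>0$.

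On positivity, the paper's proof gives no argument, and the bookkeeping you anticipate does not close. Using $\abs{\nabla p_0}=nK$, one finds
\[
c_2>0\iff j'(0)+\frac{4\|j''\|_{L^\infty(0,1/(2n))}}{n\,\mu_1(B_1)^2}<n(n+2)K .
\]
By contrast, \eqref{BoundAltoj}, read with the integral over $(0,\frac1{2n})$, says $j'(0)+4\|j''\|_{L^\infty(0,1/(2n))}\,\beta_{n/2,1}^{-4}\le n^2(n+2)K$. Two things differ:
\begin{itemize}
\item the right-hand side there is $n$ times larger;
\item $\beta_{n/2,1}^{4}$ replaces $n\mu_1(B_1)^2$.
\end{itemize}
Because the coefficient of $j'(0)$ agrees while the right-hand sides differ by the factor $n$, no inequality between $\mu_1(B_1)$ and $\beta_{n/2,1}$ can make the two conditions coincide.

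In fact the printed condition is not sufficient in general. For $n=2$, concentrating $j''$ near $s=\frac14$ keeps $K=j(\frac14)$ essentially fixed while $\|j''\|_\infty$ grows. With $j'(0)=1$, the printed condition still holds up to $\|j''\|_\infty\approx\frac34\beta_{1,1}^4$. Positivity, however, already fails once $\|j''\|_\infty\gtrsim\mu_1(B_1)^2/2$.

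So rather than trying to reproduce \eqref{BoundAltoj} verbatim, record the displayed equivalence as the hypothesis that actually guarantees $c_2>0$.
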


\begin{proof}
We first recall that, by Green's formula,
\[
|\nabla p_0|=\frac{\int_{B_1} j'(u_0)\,dx}{n\omega_n}.
\]

Factoring out the common term, the bilinear form can be written as
\[
\begin{aligned}
\partial^2\mathcal{J}(B_1)[\xi,\xi]&=\frac{2}{n}|\nabla p_0|\,\|\nabla H(\xi)\|_{L^2(B_1)}^2\left[1+\left(\frac n2-1-\frac{j'(0)}{2|\nabla p_0|}\right)\frac{\int_{\partial B_1}\xi^2\,d\mathcal H^{n-1}}{\int_{B_1}|\nabla H(\xi)|^2\,dx}\right] \\&\qquad-|\nabla u_0|^2\int_{B_1} j''(u_0)\,H(\xi)^2\,dx.
\end{aligned}
\]

By \eqref{BoundBassoj}, the coefficient of the Rayleigh quotient in brackets is non-positive. Therefore, using
\[
\min\left\{\frac{\int_{B_1}|\nabla H(\xi)|^2\,dx}{\int_{\partial B_1}\xi^2\,d\mathcal H^{n-1}}:\;\xi\in\mathcal M_0\setminus\{0\}\right\}=2,
\]
we infer that
\[
\partial^2\mathcal{J}(B_1)[\xi,\xi]\geq\frac{2}{n}|\nabla p_0|\,\|\nabla H(\xi)\|_{L^2(B_1)}^2\left[\frac n4+\frac12-\frac{j'(0)}{4|\nabla p_0|}\right]-|\nabla u_0|^2\int_{B_1} j''(u_0)\,H(\xi)^2\,dx.
\]

We now estimate the last term by Poincar\'e's inequality:
\[
-|\nabla u_0|^2\int_{B_1} j''(u_0)\,H(\xi)^2\,dx\geq-\frac{\|j''\|_{L^\infty(B_1)}}{n^2\mu_1(B_1)^2}\|\nabla H(\xi)\|_{L^2(B_1)}^2.
\]
Combining the previous inequalities, we obtain
\[
\partial^2\mathcal{J}(B_1)[\xi,\xi]\geq\frac{2}{n}|\nabla p_0|\left[\frac n4+\frac12-\frac{j'(0)}{4|\nabla p_0|}-\frac{\|j''\|_{L^\infty(B_1)}}{2n|\nabla p_0|\mu_1(B_1)^2}\right]\|\nabla H(\xi)\|_{L^2(B_1)}^2.
\]

Finally, since
\[
\|\nabla H(\xi)\|_{L^2(B_1)}^2\leq\|\xi\|_{H^{1/2}(\partial B_1)}^2\leq2\|\nabla H(\xi)\|_{L^2(B_1)}^2,
\]
it follows that
\[
\partial^2\mathcal{J}(B_1)[\xi,\xi]\geq\frac{|\nabla p_0|}{n}\left[\frac n4+\frac12-\frac{j'(0)}{4|\nabla p_0|}-\frac{\|j''\|_{L^\infty(B_1)}}{n|\nabla p_0|\mu_1(B_1)^2}\right]\|\xi\|_{H^{1/2}(\partial B_1)}^2.
\]
This concludes the proof.
\end{proof}
\paragraph{Funding}
The authors were partially supported by Gruppo Nazionale per l’Analisi Matematica, la Probabilità e le loro Applicazioni
(GNAMPA) of Istituto Nazionale di Alta Matematica (INdAM).

The first named author is partially supported by INdAM GNAMPA 2025 Project ``Proprieta’ qualitative e regolarizzanti di equazioni ellittiche e paraboliche'', CUP E5324001950001.

The second named author is partially supported by INdAM GNAMPA 2026 Project ``Processi di diffusione non-lineari: regolarità e classificazione delle soluzioni'', CUP E53C25002010001
\paragraph{Competing Interests}
We declare that we have no financial and personal relationships with other people or organizations.
\bibliographystyle{plain}
\bibliography{biblio}

@book{HP,
  title = {Shape Variation and Optimization: A Geometrical Analysis},
  ISBN = {9783037196786},
  ISSN = {2943-5005},
  url = {http://dx.doi.org/10.4171/178},
  DOI = {10.4171/178},
  journal = {EMS Tracts in Mathematics},
  publisher = {EMS Press},
  author = {Henrot,  Antoine and Pierre,  Michel},
  year = {2018},
  month = feb 
}

@article {BDPV,
    AUTHOR = {Brasco, Lorenzo and De Philippis, Guido and Velichkov,
              Bozhidar},
     TITLE = {Faber-{K}rahn inequalities in sharp quantitative form},
   JOURNAL = {Duke Math. J.},
  FJOURNAL = {Duke Mathematical Journal},
    VOLUME = {164},
      YEAR = {2015},
    NUMBER = {9},
     PAGES = {1777--1831},
      ISSN = {0012-7094,1547-7398},
   MRCLASS = {49R05 (47A75 49Q20)},
  MRNUMBER = {3357184},
MRREVIEWER = {Antoine\ Lemenant},
       DOI = {10.1215/00127094-3120167},
       URL = {https://doi.org/10.1215/00127094-3120167},
}

@article {D,
    AUTHOR = {Dambrine, Marc},
     TITLE = {On variations of the shape {H}essian and sufficient conditions
              for the stability of critical shapes},
   JOURNAL = {RACSAM. Rev. R. Acad. Cienc. Exactas F\'is. Nat. Ser. A Mat.},
  FJOURNAL = {Revista de la Real Academia de Ciencias Exactas, F\'isicas y
              Naturales. Serie A. Matem\'aticas. RACSAM},
    VOLUME = {96},
      YEAR = {2002},
    NUMBER = {1},
     PAGES = {95--121},
      ISSN = {1578-7303,1579-1505},
   MRCLASS = {49Q10 (35J20)},
  MRNUMBER = {1915674},
MRREVIEWER = {Dorin\ Bucur},
}

@article {BDR,
    AUTHOR = {Brasco, Lorenzo and De Philippis, Guido and Ruffini, Berardo},
     TITLE = {Spectral optimization for the {S}tekloff-{L}aplacian: the
              stability issue},
   JOURNAL = {J. Funct. Anal.},
  FJOURNAL = {Journal of Functional Analysis},
    VOLUME = {262},
      YEAR = {2012},
    NUMBER = {11},
     PAGES = {4675--4710},
      ISSN = {0022-1236,1096-0783},
   MRCLASS = {35P15 (35J05 35J20)},
  MRNUMBER = {2913683},
MRREVIEWER = {J.\ B.\ Kennedy},
       DOI = {10.1016/j.jfa.2012.03.017},
       URL = {https://doi.org/10.1016/j.jfa.2012.03.017},
}

@article {ALT,
	AUTHOR = {Alvino, Angelo and Lions, Pierre-Louis and Trombetti, Guido},
	TITLE = {On optimization problems with prescribed rearrangements},
	JOURNAL = {Nonlinear Anal.},
	FJOURNAL = {Nonlinear Analysis. Theory, Methods \& Applications. An
	International Multidisciplinary Journal},
	VOLUME = {13},
	YEAR = {1989},
	NUMBER = {2},
	PAGES = {185--220},
	MRCLASS = {90C48 (26B35 49A27)},
	MRNUMBER = {979040},
	MRREVIEWER = {Bruce D. Craven},
}

@book {GT,
    AUTHOR = {Gilbarg, David and Trudinger, Neil S.},
     TITLE = {Elliptic partial differential equations of second order},
    SERIES = {Classics in Mathematics},
      NOTE = {Reprint of the 1998 edition},
 PUBLISHER = {Springer-Verlag, Berlin},
      YEAR = {2001},
     PAGES = {xiv+517},
      ISBN = {3-540-41160-7},
   MRCLASS = {35-02 (35Jxx)},
  MRNUMBER = {1814364},
}

@book {K,
    AUTHOR = {Kesavan, Srinivasan},
     TITLE = {Symmetrization \& applications},
    SERIES = {Series in Analysis},
    VOLUME = {3},
 PUBLISHER = {World Scientific Publishing Co. Pte. Ltd., Hackensack, NJ},
      YEAR = {2006},
     PAGES = {xii+148},
      ISBN = {981-256-733-X},
   MRCLASS = {35A25 (26D15 35B05 46E15 47F05 47N20 49Q20)},
  MRNUMBER = {2238193},
MRREVIEWER = {Almut\ Burchard},
       DOI = {10.1142/9789812773937},
       URL = {https://doi.org/10.1142/9789812773937},
}

@article {P,
    AUTHOR = {P\'olya, George},
     TITLE = {Torsional rigidity, principal frequency, electrostatic
              capacity and symmetrization},
   JOURNAL = {Quart. Appl. Math.},
  FJOURNAL = {Quarterly of Applied Mathematics},
    VOLUME = {6},
      YEAR = {1948},
     PAGES = {267--277},
      ISSN = {0033-569X,1552-4485},
   MRCLASS = {52.0X},
  MRNUMBER = {26817},
MRREVIEWER = {G.\ H.\ Handelman},
       DOI = {10.1090/qam/26817},
       URL = {https://doi.org/10.1090/qam/26817},
}

@article {T,
    AUTHOR = {Talenti, Giorgio},
     TITLE = {Elliptic equations and rearrangements},
   JOURNAL = {Ann. Scuola Norm. Sup. Pisa Cl. Sci. (4)},
  FJOURNAL = {Annali della Scuola Normale Superiore di Pisa. Classe di
              Scienze. Serie IV},
    VOLUME = {3},
      YEAR = {1976},
    NUMBER = {4},
     PAGES = {697--718},
      ISSN = {0391-173X,2036-2145},
   MRCLASS = {35J35 (35P15)},
  MRNUMBER = {601601},
MRREVIEWER = {Vladimir\ A.\ Kondratiev},
       URL = {http://www.numdam.org/item?id=ASNSP_1976_4_3_4_697_0},
}

@article {DL,
    AUTHOR = {Dambrine, Marc and Lamboley, Jimmy},
     TITLE = {Stability in shape optimization with second variation},
   JOURNAL = {J. Differential Equations},
  FJOURNAL = {Journal of Differential Equations},
    VOLUME = {267},
      YEAR = {2019},
    NUMBER = {5},
     PAGES = {3009--3045},
      ISSN = {0022-0396,1090-2732},
   MRCLASS = {49Q10 (35J20 49K20)},
  MRNUMBER = {3953026},
MRREVIEWER = {Kevin\ Sturm},
       DOI = {10.1016/j.jde.2019.03.033},
       URL = {https://doi.org/10.1016/j.jde.2019.03.033},
}

@article {BFNT,
    AUTHOR = {Bucur, Dorin and Ferone, Vincenzo and Nitsch, Carlo and
              Trombetti, Cristina},
     TITLE = {The quantitative {F}aber-{K}rahn inequality for the {R}obin
              {L}aplacian},
   JOURNAL = {J. Differential Equations},
  FJOURNAL = {Journal of Differential Equations},
    VOLUME = {264},
      YEAR = {2018},
    NUMBER = {7},
     PAGES = {4488--4503},
      ISSN = {0022-0396,1090-2732},
   MRCLASS = {35P15 (35J05 47J30)},
  MRNUMBER = {3758529},
MRREVIEWER = {Srinivasan\ Kesavan},
       DOI = {10.1016/j.jde.2017.12.014},
       URL = {https://doi.org/10.1016/j.jde.2017.12.014},
}

@article{AB2,
  title = {On the stability of the annulus for the torsion of multiply connected domains},
  volume = {65},
  ISSN = {1432-0835},
  url = {http://dx.doi.org/10.1007/s00526-026-03362-w},
  DOI = {10.1007/s00526-026-03362-w},
  number = {6},
  journal = {Calc. Var. Partial Differential Equations},
  publisher = {Springer Science and Business Media LLC},
  author = {Amato,  Vincenzo and Barbato,  Luca},
  year = {2026},
  month = {June} 
}

@article{ABMP,
  title = {The {T}alenti comparison result in a quantitative form},
  ISSN = {0391-173X},
  url = {http://dx.doi.org/10.2422/2036-2145.202404_010},
  DOI = {10.2422/2036-2145.202404_010},
  journal = {Annali Scuola Normale Superiore - Classe di Scienze},
  publisher = {Scuola Normale Superiore - Edizioni della Normale},
  author = {Amato, Vincenzo and Barbato, Rosa and Masiello, Alba Lia and Paoli, Gloria},
  year = {2024},
  pages = {31}
}

@article {ABCMP,
    author = {{Amato}, Vincenzo and {Barbato}, Rosa and {Cito}, Simone and {Masiello}, Alba Lia and {Paoli}, Gloria},
    title = "{A quantitative Talenti-type comparison result with Robin boundary conditions}",
   JOURNAL = {

https://doi.org/10.48550/arXiv.2511.11316
},
      YEAR = {2025},
       DOI = {10.48550/arXiv.2511.11316},
       URL = {https://arxiv.org/abs/2511.11316},
}

@article{AB,
  title     = "Quantitative comparison results for first-order
               {Hamilton-Jacobi} equations",
  author    = "Amato, Vincenzo and Barbato, Luca",
  journal   = "Acta Appl. Math.",
  publisher = "Springer Science and Business Media LLC",
  volume    =  200,
  number    =  1,
  year      =  2025,
  copyright = "https://creativecommons.org/licenses/by/4.0",
  language  = "en"
}

@article {MS,
    AUTHOR = {Masiello, Alba Lia and Salerno, Francesco},
     TITLE = {A quantitative result for the {$k$}-{H}essian equation},
   JOURNAL = {Nonlinear Anal.},
  FJOURNAL = {Nonlinear Analysis. Theory, Methods \& Applications. An
              International Multidisciplinary Journal},
    VOLUME = {255},
      YEAR = {2025},
     PAGES = {Paper No. 113776},
      ISSN = {0362-546X,1873-5215},
   MRCLASS = {52A39 (35B35 35J60 35J96)},
  MRNUMBER = {4867241},
       DOI = {10.1016/j.na.2025.113776},
       URL = {https://doi.org/10.1016/j.na.2025.113776},
}

@article{AL,
  title = {Sharp Quantitative {T}alenti Inequality in Particular Cases},
  volume = {250},
  ISSN = {1432-0673},
  url = {http://dx.doi.org/10.1007/s00205-026-02179-3},
  DOI = {10.1007/s00205-026-02179-3},
  number = {3},
  journal = {Archive for Rational Mechanics and Analysis},
  publisher = {Springer Science and Business Media LLC},
  author = {Acampora,  Paolo and Lamboley,  Jimmy},
  year = {2026},
  month = apr 
}

@article {F,
    AUTHOR = {Fusco, Nicola},
     TITLE = {The quantitative isoperimetric inequality and related topics},
   JOURNAL = {Bull. Math. Sci.},
  FJOURNAL = {Bulletin of Mathematical Sciences},
    VOLUME = {5},
      YEAR = {2015},
    NUMBER = {3},
     PAGES = {517--607},
      ISSN = {1664-3607,1664-3615},
   MRCLASS = {49Q20 (28A75 52A40 53A10)},
  MRNUMBER = {3404715},
MRREVIEWER = {Georgios\ Psaradakis},
       DOI = {10.1007/s13373-015-0074-x},
       URL = {https://doi.org/10.1007/s13373-015-0074-x},
}

@book{H,
url = {https://doi.org/10.1515/9783110550887},
title = {Shape optimization and spectral theory},
author = {Henrot, Antoine},
publisher = {De Gruyter Open Poland},
address = {Warsaw, Poland},
doi = {doi:10.1515/9783110550887},
isbn = {9783110550887},
year = {2017},
lastchecked = {2026-05-20}
}

@incollection {BW,
    AUTHOR = {Bhattacharya, Tilak and Weitsman, Allen},
     TITLE = {Estimates for {G}reen's function in terms of asymmetry},
 BOOKTITLE = {Applied analysis ({B}aton {R}ouge, {LA}, 1996)},
    SERIES = {Contemp. Math.},
    VOLUME = {221},
     PAGES = {31--58},
 PUBLISHER = {Amer. Math. Soc., Providence, RI},
      YEAR = {1999},
      ISBN = {0-8218-0673-4},
   MRCLASS = {31A05 (30C20)},
  MRNUMBER = {1647193},
MRREVIEWER = {Peter\ W.\ Day},
       DOI = {10.1090/conm/221/03117},
       URL = {https://doi.org/10.1090/conm/221/03117},
}

@article {FMP,
    AUTHOR = {Fusco, Nicola and Maggi, Francesco and Pratelli, Aldo},
     TITLE = {Stability estimates for certain {F}aber-{K}rahn, isocapacitary
              and {C}heeger inequalities},
   JOURNAL = {Ann. Sc. Norm. Super. Pisa Cl. Sci. (5)},
  FJOURNAL = {Annali della Scuola Normale Superiore di Pisa. Classe di
              Scienze. Serie V},
    VOLUME = {8},
      YEAR = {2009},
    NUMBER = {1},
     PAGES = {51--71},
      ISSN = {0391-173X,2036-2145},
   MRCLASS = {49R05 (26D20 35J60 35P30 49J40)},
  MRNUMBER = {2512200},
MRREVIEWER = {Marino\ Belloni},
}

@inproceedings {N,
    AUTHOR = {Nadirashvili, Nikolai},
     TITLE = {Conformal maps and isoperimetric inequalities for eigenvalues
              of the {N}eumann problem},
 BOOKTITLE = {Proceedings of the {A}shkelon {W}orkshop on {C}omplex
              {F}unction {T}heory (1996)},
    SERIES = {Israel Math. Conf. Proc.},
    VOLUME = {11},
     PAGES = {197--201},
 PUBLISHER = {Bar-Ilan Univ., Ramat Gan},
      YEAR = {1997},
   MRCLASS = {35J25 (35P15)},
  MRNUMBER = {1476715},
MRREVIEWER = {G\"unter\ Berger},
}

@article {CCLMP,
    AUTHOR = {Carbotti, Alessandro and Cito, Simone and La Manna, Domenico
              Angelo and Pallara, Diego},
     TITLE = {Stability of the {G}aussian {F}aber-{K}rahn inequality},
   JOURNAL = {Ann. Mat. Pura Appl. (4)},
  FJOURNAL = {Annali di Matematica Pura ed Applicata. Series IV},
    VOLUME = {203},
      YEAR = {2024},
    NUMBER = {5},
     PAGES = {2185--2198},
      ISSN = {0373-3114,1618-1891},
   MRCLASS = {35P15 (49Q10 49R05)},
  MRNUMBER = {4797290},
MRREVIEWER = {Yunfeng\ Shi},
       DOI = {10.1007/s10231-024-01441-3},
       URL = {https://doi.org/10.1007/s10231-024-01441-3},
}
\Addresses
\end{document}